\newtheorem{thm}{Theorem}[section]
\newtheorem{cor}[thm]{Corollary}
\newtheorem{lem}[thm]{Lemma}
\newtheorem{prop}[thm]{Proposition}
\newtheorem{prob}[thm]{Problem}
\theoremstyle{definition}
\newtheorem{defn}[thm]{Definition}
\newtheorem{conv}[thm]{Convention}
\theoremstyle{remark}
\newtheorem{ex}[thm]{Example}
\renewcommand{\d }{{\rm d} }
\newcommand{\dl}{\widehat{\d}}
\newcommand{\G }{\Gamma (G, \mathcal A)}
\newcommand{\e }{\varepsilon }
\renewcommand{\kappa }{\varkappa}
\newcommand{\lab}{{\bf Lab}}
\newcommand{\h}{\hookrightarrow _h }
\newcommand{\sr}{{\bf sr}}
\newcommand{\C}{C_{r}^\ast}
\begin{document}

\title{On invertible elements in reduced $C^*$-algebras of acylindrically hyperbolic groups}
\author{M. Gerasimova\thanks{The first author was supported by ERC Consolidator Grant No. 681207}, D. Osin\thanks{The second author was supported by the NSF grant DMS-1612473.}}
\date{}
\maketitle

\begin{abstract}
Let $G$ be an acylindrically hyperbolic group. We prove that if $G$ has no non-trivial finite normal subgroups, then the set of invertible elements is dense in the reduced $C^\ast$-algebra of $G$. The same result is obtained for finite direct products of acylindrically hyperbolic groups.
\end{abstract}

\section{Introduction}

The topological stable rank of a $C^\ast$-algebra $A$, denoted $\sr(A)$, is a dimension like invariant introduced by Rieffel in \cite{Rie}. Recall that $\sr(A)$ is the minimal $n\in \mathbb N$ such that the set of all $n$-tuples of elements of $A$ that generate $A$ as a left ideal is dense in $A^n$; if no such $n$ exists, then $\sr(A)=\infty $. In particular, $\sr(A)=1$ if and only if the group of invertible elements $GL(A)$ is dense in $A$.

The study of stable rank of $C^\ast$-algebras is partially motivated by applications to the $K$-theory. For instance, if $A$ is a unital $C^\ast$-algebra and $\sr(A)=1$, then any two projections representing the same elements in $K_0(A)$ are homotopic \cite{B} and $K_1(A)=GL(A)/GL^0(A)$, where $GL^0(A)$ is the connected component of $GL(A)$ containing $1$ \cite{Rie87}.

All groups considered in this paper are countable by default. Given such a group $G$, we denote by $C^\ast_r(G)$ its reduced $C^\ast$-algebra. Answering a question posed in \cite{Rie}, Dykema, Haagerup, and R\o rdam \cite{DHH} proved that $C^\ast_r(F_n)$ has stable rank one, where $F_n$ is a free group of rank $1\le n\le \infty$. This result was later generalized to torsion free hyperbolic groups and certain free products with finite amalgamated subgroups by Dykema and de la Harpe \cite{DH}. The main goal of this paper is to show that the same property holds for a much larger class of groups acting on hyperbolic spaces.

Recall that an isometric action of a group $G$ on a metric space $S$ is {\it acylindrical} if for every $\e>0$, there exist $R,N>0$
such that for every two points $x,y\in S$ with $\d (x,y)\ge R$, there are at most $N$ elements $g\in G$ satisfying
$$
\d(x,gx)\le \e \;\;\; {\rm and}\;\;\; \d(y,gy) \le \e.
$$
An isometric action of a group $G$ on a hyperbolic space $S$ is \emph{non-elementary}, if the limit set  $\Lambda (G)\subseteq \partial S$ is infinite. For acylindrical actions, being non-elementary is equivalent to the action having unbounded orbits and $G$ being not virtually cyclic \cite[Theorem 1.1]{Osi16}. A group $G$ is \emph{acylindrically hyperbolic} if it admits a non-elementary acylindrical action on a hyperbolic space.

Examples of acylindrically hyperbolic groups include non-elementary hyperbolic and relatively hyperbolic groups, mapping class groups of closed surfaces of non-zero genus, $Out(F_n)$ for $n\ge 2$, non-virtually cyclic groups acting properly on proper $CAT(0)$ spaces and containing a rank-$1$ element, groups of deficiency at least $2$, most $3$-manifold groups, automorphism groups of some algebras (e.g., the Cremona group of birational transformations of the complex projective plane) and many other examples. For a more detailed discussion we refer to the survey \cite{Osi18}.

Every acylindrically hyperbolic group $G$ contains a unique maximal finite normal subgroup $K(G)$ called the \emph{finite radical} of $G$ \cite[Theorem 6.14]{DGO}. Our main result is the following.

\begin{thm}\label{main}
Let $G_1, \ldots, G_k$ be acylindrically hyperbolic groups with $K(G_i)=\{1\}$ for all $1\le i\le k$. Then $\sr(\C(G_1\times \cdots \times G_k))=1$. In particular, the reduced $C^\ast$-algebra of any acylindrically hyperbolic group with trivial finite radical has stable rank $1$.
\end{thm}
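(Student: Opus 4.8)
The plan is to show that $\op{GL}(\C(G))$ is dense in $\C(G)$, where $G = G_1 \times \cdots \times G_k$. First I would record the standard structural facts. Each $G_i$ is acylindrically hyperbolic with $K(G_i) = \{1\}$, which forces $G_i$ to be ICC (the finite conjugacy classes of such a group would generate a non-trivial finite normal subgroup); hence $G$ is ICC, the group von Neumann algebra $L(G)$ is a $\mathrm{II}_1$ factor with faithful normal trace $\tau$, and $\C(G) \subseteq L(G)$. Since the group algebra $\mathbb{C}[G]$ is norm-dense in $\C(G)$, it suffices to approximate, for every finitely supported $a \in \mathbb{C}[G]$ and every $\e > 0$, the element $a$ in operator norm by an invertible element of $\C(G)$.

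The engine of the argument is the reduction to $R$-diagonal elements in the spirit of Dykema--Haagerup--R\o rdam \cite{DHH}. Suppose $u \in \C(G)$ is a Haar unitary that is $*$-free from $a$ with respect to $\tau$. Then $ua$ is $R$-diagonal: since $|ua| = (a^*a)^{1/2} = |a|$, its polar part is a Haar unitary free from $|a|$, so the Brown measure of $ua$ is rotation-invariant, and the DHH estimates show that $ua$ is a norm-limit of invertibles inside $C^*(u,a) \subseteq \C(G)$. Writing $a = u^*(ua)$ and using that a product of invertibles is invertible, this reduces the whole problem to producing, \emph{inside} $\C(G)$, a Haar unitary that is (asymptotically) $*$-free from the prescribed $a$.

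This is where acylindrical hyperbolicity enters. By the theory of hyperbolically embedded subgroups underlying \cite[Theorem 6.14]{DGO}, each $G_i$ contains a loxodromic element $t_i$ of infinite order with the weak proper discontinuity property; equivalently, $G_i$ contains a hyperbolically embedded free subgroup $F \h G_i$. Setting $t = (t_1, \ldots, t_k) \in G$, conjugation by the powers $t^n$ disperses the finite support of $a$ in every coordinate simultaneously, so the canonical Haar unitaries $u_{t^n} \in \C(G)$ become asymptotically $*$-free from $a$ as $n \to \infty$; for large $n$ the element $u_{t^n}a$ is therefore approximately $R$-diagonal. Handling the direct product by coordinatewise dispersal of a single loxodromic tuple is exactly what lets the method run even though $G$ itself need not be acylindrically hyperbolic.

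The main obstacle, I expect, is the passage from \emph{asymptotic} freeness to a genuine operator-norm approximation by invertibles. The DHH invertibility estimate for $R$-diagonal elements is controlled by the behaviour of the Brown measure near $0$, and one must establish a quantitative, robust version showing that an element close to $R$-diagonal is close (in norm, not merely in $\|\cdot\|_2$) to a norm-limit of invertibles in $C^*(u_{t^n}, a) \subseteq \C(G)$, with control uniform enough to survive the limit $n \to \infty$. On the group-theoretic side, the technical heart is verifying that the loxodromic tuple really drives the mixed moments of $u_{t^n}$ and $a$ to their free values in every coordinate at once; this is where acylindricity of the actions and the triviality of the finite radicals $K(G_i)$ are actually used.
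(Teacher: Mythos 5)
There is a genuine gap, and you have in fact named it yourself: the passage from \emph{asymptotic} freeness to operator-norm approximation by invertibles is not a technical loose end but the entire content of the theorem, and your proposal contains no mechanism for it. Two distinct problems arise. First, the asymptotic $\ast$-freeness claim is unsubstantiated: dispersing the support of $a$ by conjugation with powers $t^n$ does not make the mixed moments of $u_{t^n}$ and $a$ converge to their free values --- vanishing of traces of alternating centered products is a strong combinatorial condition on the group, and in $\C(G)$ for a general acylindrically hyperbolic $G$ there is simply no Haar unitary exactly $\ast$-free from a given $a$ (exact freeness is what makes the Dykema--Haagerup--R\o rdam $R$-diagonal argument run in $\C(F_n)$, where it comes from the free product structure of the group itself). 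Second, and more fatally, even granting asymptotic freeness, that is a statement about moments, i.e.\ about $\|\cdot\|_2$-data with respect to $\tau$; stable rank one demands approximation in the \emph{operator} norm, and no quantitative ``approximately $R$-diagonal implies norm-close to invertibles'' statement is known or supplied. So the proposal reduces the theorem to an unproven analytic principle rather than proving it.

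The paper closes exactly this gap by a different, geometric route. It invokes the Dykema--de la Harpe criterion (Theorem \ref{sr1}): it suffices to find, for each finite $F$, an element $t$ such that $tF$ freely generates a free subsemigroup and every $a$ with $supp(a)\subseteq tF$ satisfies the $\ell^2$-spectral radius property $r(a)=r_2(a)$. The subsemigroup freeness is proved combinatorially (Lemma \ref{l4}), using an infinite cyclic hyperbolically embedded subgroup $H\h G$ with $FHF^{-1}\cap H=\{1\}$ (Lemma \ref{l1}) and the isolated-component estimates of Proposition \ref{n-gon}; this replaces your appeal to loxodromic elements and is the correct use of the hypothesis $K(G_i)=\{1\}$ (via Theorem \ref{Eg}). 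The spectral radius property is then obtained not from free probability at all, but from the norm bound $\|a\|\le \gamma(\rho(n))^{3/2}\|a\|_2$ of Proposition \ref{CR}, valid whenever there is a symmetric $G$-equivariant generalized combing with the intersection property (b) and polynomially bounded growth functions; Proposition \ref{combing} constructs such a combing from the geodesics $\gamma_g$ in $\Gamma(G,\mathcal A)$ thickened by the finite set $\Omega$. Your coordinatewise treatment of the direct product does match the paper's Lemma \ref{prod} (product combings, $\max$-pseudolength), and that is the one portion of your outline that survives; but the analytic engine you propose would have to be replaced wholesale by the combing machinery of Sections 2 and 4 before the argument constitutes a proof.
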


This result is new even for $k=1$ and covers previously known result from \cite{DH}. It also shows that the behaviour of $\sr(\C(G_1\times \cdots \times G_k))$ for acylindrically hyperbolic groups is in sharp contrast to the case when $G_1, \ldots, G_k$ are abelian. Indeed, the Gelfand-Neimark theorem and basic facts from dimension theory imply that  $$\sr(\C(\mathbb Z^k))=[k/2]+1\to \infty $$ as $k\to \infty$ \cite{Rie}.

We note that the reduced $C^\ast$-algebras of products of acylindrically hyperbolic groups with trivial finite radical are always simple. Indeed, this is an easy consequence of \cite[Theorem 2.35]{DGO} and \cite[Theorem 1.4]{BKKO}. In general, every $r\in \mathbb N\cup \{\infty\}$ realizes as the stable rank of a simple $C^\ast$-algebra \cite{V}. However, we are not aware of any example of a group $G$ such that $\C(G)$ is simple and $\sr(\C(G))>1$.

The proof of Theorem \ref{main} follows the general strategy suggested in \cite{DHH} and \cite{DH}. The crucial ingredient used in these papers is the \emph{$\ell^2$-spectral radius property} for elements of $\C(G)$, which is derived from Jolissant's property of rapid decay in case $G$ is hyperbolic or from the tree-like structure in case $G$ is an amalgamated free product. Unfortunately, neither of these two approaches works for general acylindrically hyperbolic groups.

To overcome this problem, we suggest a geometric method of bounding the operator norm of elements of $\mathbb CG$ from above  (see Section 2) inspired by the work of Catterji--Ruane and Sapir on property (RD) \cite{CR, S}. This approach requires constructing \emph{generalized combings}, i.e. maps from $G\times G$ to the set of all subsets of $G$, with certain additional properties. We show that every acylindrically hyperbolic group admits such a generalized combing in Section 4. This is the most technical part of our work, which makes use of the notion of a \emph{hyperbolically embedded subgroup} introduced in \cite{DGO}. To make our paper as self-contained as possible, we review the necessary background in Section 3. Finally, we combine the results obtained in Sections 2 and 4 to prove our main theorem in Section 5.

We conclude with the following question.
\begin{prob}
Suppose that a group $G$ splits as
$$
1\longrightarrow K \longrightarrow G\longrightarrow Q\longrightarrow 1,
$$
where $K$ is finite and $\sr(\C(Q))=1$. Does it follow that $\sr(\C(G))=1$?
\end{prob}
The affirmative answer to this question together with Theorem \ref{main} would imply that $\sr(\C(G))=1$ for any acylindrically hyperbolic group $G$, as well as for any direct product of such groups. It is worth noting that in the simplest case $G=K\times Q$, the equality $\sr(\C(G))=1$ follows from results of \cite{Rie}.

\section{Bounding the operator norm via generalized combings}

The main goal of this section is to develop geometric tools for bounding the operator norm of elements of a group algebra in terms of their $\ell^2$-norm. We begin by recalling necessary definitions and notation.

Given a countable group $G$, let $\ell^2(G)$ denote the set of all square-summable functions $f\colon G\to \mathbb C$. By $\lambda_G\colon \mathbb CG\to B(\ell^2(G))$ we denote the left regular representation of $\mathbb CG$, where $B(\ell^2(G))$ is the set of all bounded operators on $\ell^2(G)$.  For an element $a\in \mathbb CG$, we denote by $\| a\|_2$ its $\ell^2$-norm and by $\| a\|$ the operator norm of $\lambda_G(a)\in B(\ell^2(G))$. That is
$$
\| a\| =\sup\limits_{v\in \ell^2(G)\setminus \{ 0\}} \frac{\| av\|_2}{\| v\|_2}.
$$

The \emph{reduced $C^\ast $-algebra of a group $G$}, denoted $C^\ast_{red}(G)$, is the closure of $\lambda_G (\mathbb CG)$ in $ B(\ell^2(G))$ with respect to the operator norm. The involution on $C^\ast_{red}(G)$ is induced by the standard involution on $\mathbb CG$:
$$\left(\sum\limits_{g\in G} \alpha_g g\right)^\ast = \sum\limits_{g\in G} \bar\alpha_g g^{-1},$$ where $\alpha _g\in \mathbb C$ for all $g\in G$.

Finally, we let
$r(a)$ and $r_2(a)$ denote the spectral radii of an element $a\in \C(G)$ corresponding to the operator norm and the $\ell^2$-norm, respectively. That is,
$$
r(a)=\lim_{k\to \infty}\sqrt[k]{\| a^k\|}
$$
and
$$
r_2(a)=\limsup_{k\to \infty}\sqrt[k]{\| a^k\|_2}.
$$
As $\|a\|_2 \leq \|a\|$, we clearly have
\begin{equation}\label{r2r}
r_2(a) \leq r(a).
\end{equation}

Recall that a \emph{combing} on a group $G$ generated by a set $A$ is a map that to each pair of points $g,h\in G$ assigns a path $\gamma_{g,h}$ in the Cayley graph of $G$ with respect to $A$ connecting $g$ to $h$. Combings on groups were introduced in the pioneering work of Epstein and Thurston \cite{ET} and played fundamental role is several branches of geometric group theory, including the theory of automatic and semihyperbolic groups. Observe that every combing on $G$ yields a map $G\times G \to \mathcal{P}(G)$, where $\mathcal{P}(G)$ is the set of all subsets of $G$, via the identification of the path $\gamma_{g,h}$ with its set of vertices. This interpretation leads to the following generalization.

\begin{defn}
Let $G$ be a group. A \emph{generalized combing} of $G$ is a map $C\colon G\times G \to \mathcal{P}(G)$. We say that the combing $C$ is
\begin{enumerate}
\item[---] \emph{symmetric} if $C(x,y)=C(y,x)$ for all $x,y\in G$;
\item[---] \emph{$G$-equivariant} if $C(gx,gy)=gC(x,y)$ for all $x,y,g\in G$.
\end{enumerate}
\end{defn}

In this section, we will also use a generalization of length functions of groups.

\begin{defn}
A map $\ell\colon G\to [0, +\infty)$ is a \emph{pseudolength function} on a group $G$ if it is symmetric and satisfies the triangle inequality; that is, $\ell(g^{-1})=\ell(g)$ and $\ell(gh)\le \ell(g) + \ell(h)$ for all $g,h\in G$. We say that $\ell$ is a length function if, in addition, $\ell (g)=0$ if and only if $g=1$.
\end{defn}
Pseudolength functions naturally occur from group actions on metric spaces. Indeed, if a group $G$ acts isometrically on a metric space $(X,\d)$, then fixing a basepoint $x\in X$ we get a pseudolength function $\ell(g)= \d (x, gx)$. Another natural class of examples consists of word lengths on $G$ with respect to fixed generating sets. In fact, considering word lengths would suffice for the purpose of proving Theorem \ref{main}. We choose to work with pseudolength functions because the proof is essentially the same and our results can be potentially applied in the more general context of groups acting on metric spaces.

From now on, we fix a group $G$ and a pseudolength function $\ell\colon G\to [0,+\infty)$. For every $n\in \mathbb N$, we define
$$
B (n)=\{ g\in G\mid \ell(g) \le n\}.
$$
To each generalized combing $C\colon G\times G \to \mathcal{P}(G)$, we associate two growth functions $\gamma, \rho\colon \mathbb N\to \mathbb N\cup\{\infty\}$ defined as follows.
Let
\begin{equation}\label{gamma}
\gamma (n) = \sup\limits_{g\in G} |C(1,g)\cap B (n)|
\end{equation}
and
\begin{equation}\label{delta}
\rho (n)= \sup\limits_{g\in B(n)} \sup_{x\in  C(1,g)} \ell(x).
\end{equation}
Note that, in general, these functions can take infinite values.

We will need the following elementary observation.

\begin{lem}\label{C1g}
Let $C\colon G\times G\to \mathcal P(G)$ be a generalized combing such that the functions $\gamma$ and $\rho$ take only finite values. Then for any element $s\in B(n)$, we have $|C(1,s)|\le  \gamma(\rho(n))$
\end{lem}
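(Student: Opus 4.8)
The plan is to unwind the definitions of $\gamma$ and $\rho$ in \eqref{gamma} and \eqref{delta}, observe that the combing set $C(1,s)$ is forced to lie inside a single ball $B(\rho(n))$, and then let the function $\gamma$ supply the cardinality bound. The whole argument is a matter of correctly reading the nested suprema.

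First I would fix $s\in B(n)$ and take an arbitrary element $x\in C(1,s)$. Since $s\in B(n)$, the definition of $\rho$ in \eqref{delta} applies with the admissible choice $g=s$, giving
$$
\ell(x)\le \sup_{x'\in C(1,s)}\ell(x') \le \sup_{g\in B(n)}\;\sup_{x'\in C(1,g)}\ell(x') = \rho(n).
$$
As $x$ was arbitrary, this shows $C(1,s)\subseteq B(\rho(n))$. It is here that the hypothesis that $\rho$ takes only finite values is used: it guarantees that $\rho(n)$ is a genuine natural number, so that $B(\rho(n))$ and hence $\gamma(\rho(n))$ are meaningful.

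Next I would use this inclusion to rewrite the cardinality as $|C(1,s)| = |C(1,s)\cap B(\rho(n))|$, and then apply the definition of $\gamma$ in \eqref{gamma}, again with the admissible element $g=s$, to obtain
$$
|C(1,s)\cap B(\rho(n))|\le \sup_{g\in G}|C(1,g)\cap B(\rho(n))| = \gamma(\rho(n)),
$$
which is exactly the claim (the finiteness of $\gamma$ ensures the right-hand side is a number).

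There is no substantive obstacle here; as the authors indicate, this is an elementary bookkeeping observation. The only points demanding care are checking that $s$ is a legitimate value of the outer variable in both suprema, so that the two inequalities genuinely apply, and noting that the two finiteness hypotheses are precisely what make the composite quantity $\gamma(\rho(n))$ well defined.
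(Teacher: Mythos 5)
Your proof is correct and follows exactly the paper's argument: both establish the inclusion $C(1,s)\subseteq B(\rho(n))$ from the definition of $\rho$ and then bound $|C(1,s)| = |C(1,s)\cap B(\rho(n))|$ by $\gamma(\rho(n))$ using the definition of $\gamma$. Your version merely spells out the nested suprema and the role of the finiteness hypotheses in slightly more detail than the paper's two-line proof.
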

\begin{proof}
Note that (\ref{gamma}) and (\ref{delta}) imply that $C(1,s)\subseteq B(\rho(n))$ for every $s\in B(n)$. Hence,
$|C(1,s)|= |C(1,s)\cap B(\rho(n))|\le \gamma(\rho(n))$.
\end{proof}

We are now ready to state the main result of this section. In the particular case $S=G$, it is similar to \cite[Proposition 1.7]{CR}. The proof is inspired by the approach suggested in \cite{S}.

\begin{prop}\label{CR}
Let $G$ be a group endowed with a pseudolength function $\ell\colon G\to [0, +\infty)$ and let $S$ be a subset of $G$. Suppose that there exists a symmetric $G$-equivariant generalized combing ${C\colon G\times G \to \mathcal{P}(G)}$ such that
\begin{equation}\label{Cint}
C(1,s) \cap C(s,g) \cap C(1,g) \neq \emptyset
\end{equation}
for all $s\in S$ and $g\in G$ and the associated growth functions $\gamma$ and $\rho$ take only finite values. Then for every $a\in \mathbb CG$ and $n\in \mathbb N$ such that $supp (a)\subseteq S\cap B(n)$, we have
\begin{equation}\label{||a||}
\|a\| \le \gamma (\rho(n))^{3/2} \| a\| _2.
\end{equation}
In particular, if $S$ is a subsemigroup of $G$ and $\lim\limits_{k\to \infty} \sqrt[k]{\gamma(\rho(k))}=1$, then $r(a)=r_2(a)$.
\end{prop}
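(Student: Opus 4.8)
The plan is to deduce the operator-norm bound (\ref{||a||}) from the convolution estimate
$$\|a\ast v\|_2\le\gamma(\rho(n))^{3/2}\,\|a\|_2\,\|v\|_2\qquad\text{for all }v\in\ell^2(G),$$
since $\|a\|=\sup_{v\ne 0}\|av\|_2/\|v\|_2$ and $|(av)(h)|\le(|a|\ast|v|)(h)$; replacing $a,v$ by $|a|,|v|$, I may assume their coefficients are nonnegative. So I would fix nonnegative $a$ with $\operatorname{supp}(a)\subseteq S\cap B(n)$, set $F_h=\sum_{s}a_s\,v(s^{-1}h)$, and aim to prove $\sum_h F_h^2\le\gamma(\rho(n))^3\|a\|_2^2\|v\|_2^2$.

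The geometric input enters through the common point. For each pair $(s,h)$ with $s\in\operatorname{supp}(a)$, I would apply (\ref{Cint}) with this $s\in S$ and $g=h$ to choose $p=p(s,h)\in C(1,s)\cap C(s,h)\cap C(1,h)$, and put $q=s^{-1}p$, so $s=pq^{-1}$. Using $G$-equivariance, symmetry of $C$, and $s\in B(n)$, I would record four facts: $\ell(p)\le\rho(n)$ and $\ell(q)\le\rho(n)$ (from $p\in C(1,s)=C(s,1)$ and (\ref{delta})), together with the three memberships $p\in C(1,h)$, $q=s^{-1}p\in C(1,s^{-1}h)$, and $p\in C(1,s)$. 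By (\ref{gamma}), each membership furnishes one factor $\gamma(\rho(n))$: for fixed $h$ the point $p$ lies in $C(1,h)\cap B(\rho(n))$; for fixed $g=s^{-1}h$ the point $q$ lies in $C(1,g)\cap B(\rho(n))$; and for fixed $s$ the point $p$ lies in $C(1,s)\cap B(\rho(n))$.

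I would then run two nested Cauchy--Schwarz steps. First partition $F_h$ by the value $p\in C(1,h)\cap B(\rho(n))$ and apply Cauchy--Schwarz over $p$ (a set of size $\le\gamma(\rho(n))$), gaining the first factor. For fixed $p,h$, a second Cauchy--Schwarz in $s$ splits $\sum_{s:\,p(s,h)=p}a_s\,v(s^{-1}h)$ into an $a$-factor $A_{p,h}=\sum_{s:\,p(s,h)=p}a_s^2$ and a $v$-factor $V_{p,h}=\sum_{s:\,p(s,h)=p}v(s^{-1}h)^2$. The crucial move is to bound $A_{p,h}\le A_p:=\sum_{s\in\operatorname{supp}(a),\,p\in C(1,s)}a_s^2$, which is independent of $h$, whence
$$\sum_h F_h^2\le\gamma(\rho(n))\sum_{p}A_p\!\!\sum_{h:\,p\in C(1,h)}\!\!V_{p,h}\le\gamma(\rho(n))^2\|v\|_2^2\sum_p A_p\le\gamma(\rho(n))^3\|a\|_2^2\|v\|_2^2 ;$$
here, for fixed $p$, re-indexing $\sum_h V_{p,h}$ by $(q,g)$ with $g=s^{-1}h$ weights each $g$ by $|C(1,g)\cap B(\rho(n))|\le\gamma(\rho(n))$ (the second factor), and $\sum_p A_p=\sum_s a_s^2\,|C(1,s)\cap B(\rho(n))|\le\gamma(\rho(n))\|a\|_2^2$ (the third). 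I expect the main obstacle to be the \emph{order of summation} rather than any single estimate: a direct attempt to count, for fixed $p$ and $h$, the admissible second legs $q$ runs into the self-referential condition $q\in C(1,qp^{-1}h)$, which is not controlled by $\gamma$. Splitting the $a$- and $v$-parts and summing over $h$ \emph{before} counting $q$ is exactly what turns this into the benign count $|C(1,g)\cap B(\rho(n))|\le\gamma(\rho(n))$; likewise, retaining the membership $p\in C(1,s)$ (not merely $\ell(p)\le\rho(n)$) is what keeps $\sum_p A_p$ from degenerating into $|B(\rho(n))|\,\|a\|_2^2$.

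For the final assertion, (\ref{r2r}) already gives $r_2(a)\le r(a)$, so only $r(a)\le r_2(a)$ remains. Since $S$ is a subsemigroup and $\ell$ is subadditive, $\operatorname{supp}(a^k)\subseteq S\cap B(kn)$ whenever $\operatorname{supp}(a)\subseteq S\cap B(n)$, so (\ref{||a||}) applies to $a^k$ and yields $\|a^k\|\le\gamma(\rho(kn))^{3/2}\|a^k\|_2$. Taking $k$-th roots gives $r(a)\le\big(\lim_k\gamma(\rho(kn))^{3/(2k)}\big)\cdot\limsup_k\|a^k\|_2^{1/k}$; writing $\gamma(\rho(kn))^{3/(2k)}=\big(\gamma(\rho(kn))^{1/(kn)}\big)^{3n/2}$ and invoking the hypothesis $\gamma(\rho(m))^{1/m}\to1$ forces the first factor to $1$, while the second equals $r_2(a)$. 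Hence $r(a)\le r_2(a)$, and therefore $r(a)=r_2(a)$.
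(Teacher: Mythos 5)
Your proof is correct and follows essentially the same route as the paper's: you group the convolution terms by a common point from (\ref{Cint}) lying in $B(\rho(n))$, apply Cauchy--Schwarz twice, and extract the three factors of $\gamma(\rho(n))$ from exactly the three memberships the paper exploits (your $q=s^{-1}p$ is precisely the paper's $tg^{-1}x$), while the semigroup/spectral-radius argument is identical. The only differences are bookkeeping: you fix a selector $p(s,h)$ instead of the paper's covering of $\operatorname{supp}(a)$ by the sets $S_{g,x}$ (which, incidentally, makes your positivity reduction unnecessary, since your decomposition of $F_h$ is an exact partition), and you re-index by $(q,g)$ and sum over $h$ first rather than bounding the paper's coefficients $C_{s,t}$ by counting pairs $(g,x)$.
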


Let $\mathbb R_+ G$ denote the subset of the group algebra $\mathbb CG$ consisting of linear combinations of elements of $G$ with positive real coefficients. The main step in the proof of Proposition \ref{CR} is the following.

\begin{lem}\label{r+}
Under the assumptions of Proposition \ref{CR}, suppose additionally that $a\in \mathbb R_+G$. Then for every $b\in \mathbb R_+ G$, we have
\begin{equation} \label{ab}
\|ab\|_2 \le \gamma(\rho(n))^{3/2} \|a\|_2\| b\|_2.
\end{equation}
\end{lem}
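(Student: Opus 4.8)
The plan is to prove the inequality $\|ab\|_2 \le \gamma(\rho(n))^{3/2}\|a\|_2\|b\|_2$ for $a,b \in \mathbb R_+ G$ with $\mathrm{supp}(a)\subseteq S\cap B(n)$ directly, by expanding $\|ab\|_2^2$ and estimating the coefficients of the product using the combing condition \eqref{Cint}. Write $a=\sum_{s}\alpha_s s$ and $b=\sum_g \beta_g g$, with all $\alpha_s,\beta_g \ge 0$ and $s$ ranging over $S\cap B(n)$. The coefficient of a fixed group element $w$ in $ab$ is $\sum_{sg=w}\alpha_s\beta_g = \sum_{s}\alpha_s\beta_{s^{-1}w}$, so $\|ab\|_2^2 = \sum_w\bigl(\sum_s \alpha_s\beta_{s^{-1}w}\bigr)^2$. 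Since everything is nonnegative I can expand the square and obtain
$$
\|ab\|_2^2 = \sum_{s,t}\alpha_s\alpha_t \sum_w \beta_{s^{-1}w}\beta_{t^{-1}w},
$$
reducing the estimate to controlling, for each pair $(s,t)$ of support elements, the correlation $\sum_w \beta_{s^{-1}w}\beta_{t^{-1}w}$.

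First I would exploit the combing condition to build a witnessing function. For each $s\in S$ and each $g\in G$, condition \eqref{Cint} guarantees an element lying in $C(1,s)\cap C(s,g)\cap C(1,g)$; fix a choice $m(s,g)$ of such an element. By $G$-equivariance and symmetry of $C$, the point $m(s,g)$ lies on a ``geodesic-like'' set joining $s$ to $g$, and in particular $m(s,g)\in C(s,g)=s\,C(1,s^{-1}g)$ and $m(s,g)\in C(1,s)$. The key geometric payoff, via Lemma \ref{C1g}, is that both $C(1,s)$ and $s^{-1}C(1,g)=C(1,s^{-1}g)$ have cardinality at most $\gamma(\rho(n))$ once $s\in B(n)$ — so the midpoint $m$ ranges over a controlled set and, crucially, the maps $g\mapsto m(s,g)$ have bounded fibers. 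The strategy is then a Cauchy–Schwarz splitting: write each term of the $w$-sum as a product of two factors indexed through $m$, apply Cauchy–Schwarz to factor the correlation $\sum_w\beta_{s^{-1}w}\beta_{t^{-1}w}$ into pieces that can each be bounded by $\|b\|_2^2$ times a combinatorial factor coming from $\gamma(\rho(n))$.

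The main obstacle, and the heart of the argument, is the bookkeeping that converts the combing cardinality bound into the precise exponent $3/2$. The point is that the midpoint $m$ sits simultaneously in a set of size $\le\gamma(\rho(n))$ attached to $1$ and $s$, and in a translate of such a set attached to $g$; factoring the double sum over $w$ through the variable $m$ produces three independent factors of $\gamma(\rho(n))^{1/2}$, one from each of the three combing sets in \eqref{Cint}, and after resumming one of them collapses against $\|a\|_2$. I would organize this by introducing the matrix $T_{s,w}=\mathbf 1[s\in S\cap B(n)]\,\mathbf 1[m(s,\cdot)\text{ is defined}]$ and estimating operator norms of the associated incidence operators; the subtlety is ensuring the fiber bounds are applied on the correct side so that the final constant is exactly $\gamma(\rho(n))^{3/2}$ rather than a larger power. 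Once Lemma \ref{r+} is in hand, Proposition \ref{CR} follows by a standard positivity/domination argument: for general $a$ one passes to $\tilde a=\sum|\alpha_g|g$, notes $\|a\|\le\|\tilde a\|$, and iterates \eqref{ab} to control $\|a^k\|_2$, taking $k$-th roots to reach $r(a)=r_2(a)$ when the subexponential growth hypothesis holds.
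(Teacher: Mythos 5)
Your setup is legitimate --- for nonnegative coefficients the expansion $\|ab\|_2^2=\sum_{s,t}\alpha_s\alpha_t\sum_w\beta_{s^{-1}w}\beta_{t^{-1}w}$ is valid, and fixing a midpoint $m(s,g)\in C(1,s)\cap C(s,g)\cap C(1,g)$ is exactly the right use of \eqref{Cint} --- but the proof stops where it should begin. The step you yourself flag as ``the main obstacle, and the heart of the argument'' (converting the cardinality bound of Lemma \ref{C1g} into the exponent $3/2$) is never carried out, and your sketch of it contains a concrete error: the claim that ``the maps $g\mapsto m(s,g)$ have bounded fibers'' is false, since the domain is the whole infinite group $G$ while the image lies in $C(1,s)$, a set of at most $\gamma(\rho(n))$ elements, so some fiber must be infinite. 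The correct bounded-multiplicity statement --- the crux of the paper's proof --- is different: for a \emph{fixed} midpoint $x$ and a \emph{fixed} $t=s^{-1}g$, there are at most $\gamma(\rho(n))$ possible values of $g$. One proves this by using symmetry and $G$-equivariance of $C$ to rewrite the condition $x\in C(1,gt^{-1})\cap C(gt^{-1},g)$ (with $gt^{-1}\in \mathrm{supp}(a)\subseteq B(n)$) as $g^{-1}x\in t^{-1}\left(B(\rho(n))\cap C(1,t)\right)$ and then invoking the definition \eqref{gamma} of $\gamma$. Nothing in your plan supplies this translation, and without it your correlation estimate cannot close: the trivial Cauchy--Schwarz bound $\sum_w\beta_{s^{-1}w}\beta_{t^{-1}w}\le\|b\|_2^2$ applied to your expansion only yields $\|ab\|_2\le\|a\|_1\|b\|_2$, which loses a factor of order $|\mathrm{supp}(a)|^{1/2}$ --- controlled by $|B(n)|$, not by $\gamma(\rho(n))$.

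Two further signs the bookkeeping was not thought through: the auxiliary matrix $T_{s,w}=\mathbf 1[\,s\in S\cap B(n)\,]\,\mathbf 1[\,m(s,\cdot)\ \text{is defined}\,]$ is vacuous, because \eqref{Cint} guarantees $m(s,g)$ exists for every $s\in S$ and $g\in G$; and the heuristic ``three independent factors of $\gamma(\rho(n))^{1/2}$, one of which collapses against $\|a\|_2$'' does not correspond to any actual inequality chain ($\|a\|_2$ is a factor in the target, not a quantity against which a combinatorial factor can collapse). For comparison, the paper's accounting of $\gamma(\rho(n))^3$ on $\|ab\|_2^2$ is: one factor of $\gamma(\rho(n))$ from a pigeonhole/quadratic-mean step, after covering $\mathrm{supp}(a)\subseteq\bigcup_{x\in X_g}S_{g,x}$ with $X_g=B(\rho(n))\cap C(1,g)$ and $|X_g|\le\gamma(\rho(n))$; then, after Cauchy--Schwarz on each piece, a factor $\gamma(\rho(n))^2$ bounding the multiplicity $C_{s,t}$ with which a term $\alpha_s^2\beta_t^2$ occurs --- at most $\gamma(\rho(n))$ choices of $x$ for fixed $s$ (Lemma \ref{C1g}), times at most $\gamma(\rho(n))$ choices of $g$ for fixed $(t,x)$ (the equivariance argument above). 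Your ingredients overlap with the paper's (midpoints from \eqref{Cint}, Cauchy--Schwarz, Lemma \ref{C1g}, symmetry and equivariance), but the decisive counting argument is absent, so as written the proposal is a plan with a genuine gap rather than a proof.
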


\begin{proof}
We fix arbitrary $n\in \mathbb N$ and $a\in \mathbb CG$ such that $supp (a)\subseteq S\cap B(n)$.
To simplify our notation, we introduce the following sets. For every $g\in G$, let
$$X_{g}=B(\rho(n))\cap C(1,g). $$
Further, for every $g\in G$ and $x\in X_g$, let
$$S_{g,x}=\{ s\in supp(a) \mid x\in C(1,s)\cap C(s,g)\}.$$

Note that for any $g\in G$, we have
\begin{equation}\label{suppa}
supp (a)\subseteq \bigcup\limits_{x\in X_g} S_{g,x}.
\end{equation}
Indeed, for every $s\in supp(a)\subseteq B(n)$,  we have $C(1,s)\subseteq B(\rho(n))$ by the definition of $\rho(n)$. By (\ref{Cint}), the intersection
$C(1,s) \cap C(s,g) \cap C(1,g)\cap B(\rho(n))=X_g \cap C(1,s) \cap C(s,g)$
is non-empty; taking any element $x$ from this intersection, we obtain $s\in S_{g,x}$ and (\ref{suppa}) follows.

Let
$$
a=\sum\limits_{g\in G} \alpha_gg, \;\;\; {\rm and}\;\;\; b=\sum\limits_{g\in G} \beta_gg.
$$
Observe that $\left| X_g\right| \leq \gamma(\rho(n))$ for any $g\in G$ by the definition of $\gamma(n)$. Using (\ref{suppa}) and then the inequality between the arithmetic and quadratic means, we obtain

\begin{align*}
\|ab\|_2^2=& \sum\limits_{g\in G}\left(\sum\limits_{s\in supp(a)}\alpha_s\beta_{s^{-1}g}\right)^2 \leq \\
&  \sum\limits_{g\in G}\left(\sum\limits_{x\in X_g}\sum\limits_{s\in S_{g,x}} \alpha_s\beta_{s^{-1}g}\right)^2  \leq\\
& \gamma(\rho(n))\sum\limits_{g\in G} \sum \limits_{x\in X_g} \left(\sum \limits_{s\in S_{g,x}} \alpha_s\beta_{s^{-1}g} \right)^2.
\end{align*}

Given $g, x\in G$, let
$$
T_{g,x}=\{ s^{-1}g\mid s\in S_{g,x}\}.
$$
Using the Cauchy-Schwarz inequality and substituting $t=s^{-1}g$, we obtain

\begin{align*}
\|ab\|_2^2 \; \leq\;  & \gamma(\rho(n))\sum_{g \in G} \sum\limits_{x\in X_g}\left( \sum\limits_{s\in S_{g,x}} \alpha_s^2\right)\left( \sum\limits_{s \in S_{g,x}} \beta_{s^{-1}g}^2\right) = \\
& \gamma(\rho(n))\sum_{g \in G} \sum\limits_{x\in X_g}\left( \sum\limits_{s\in S_{g,x}} \alpha_s^2\right)\left( \sum\limits_{t \in T_{g,x}} \beta_{t}^2\right)= \\
& \gamma(\rho(n)) \sum\limits_{s\in G} \sum\limits_{t\in G} C_{s,t} \alpha_s^2\beta_{t}^2,
\end{align*}
for some $C_{s,t}\ge 0$.

We now estimate the coefficients $C_{s,t}$. To this end, we note that every individual term $\alpha_s^2\beta_t^2$ occurs in the product $\left( \sum\limits_{s\in S_{g,x}} \alpha_s^2\right)\left( \sum\limits_{t \in T_{g,x}} \beta_{t}^2\right)$ at most once. Therefore, for every fixed $s$ and $t$, $C_{s,t}$ is bounded by the number of pairs $(g,x)\in G\times G$ satisfying the conditions $x\in X_g$,
\begin{equation}\label{s}
s\in S_{g,x},
\end{equation}
and
\begin{equation}\label{t}
t\in T_{g,x}.
\end{equation}
By the definition of $S_{g,x}$, (\ref{s}) implies that $s\in supp (a)$ and $x\in C(1,s)$. Since $supp(a)\subseteq B(n)$, we obtain $|C(1,s)| \le \gamma(\rho(n))$ by Lemma \ref{C1g}. Thus, there are at most $\gamma(\rho(n))$ elements $x$ satisfying (\ref{s}) for every fixed $s$.

Further, we fix $x$ and $t$ and note that (\ref{t}) is equivalent to $gt^{-1} \in S_{g,x}$. In turn, this is equivalent to $gt^{-1} \in supp(a)$ and $x\in C(1, gt^{-1})\cap C(gt^{-1},g)$. Since $\ell$ is symmetric, the former condition implies $\ell(tg^{-1})= \ell(gt^{-1})\le n$; hence $C(1, tg^{-1})\subseteq B(\rho(n))$. Now the latter condition and (\ref{delta}) yield
\begin{align*}
g^{-1}x \in & C(g^{-1}, t^{-1}) \cap C(t^{-1}, 1) = t^{-1}( C(tg^{-1}, 1)\cap C(1,t))= \\ & t^{-1}( C(1, tg^{-1})\cap C(1, t) \subseteq t^{-1} (B(\rho(n)) \cap C(1, t)).
\end{align*}
(here we use our assumption that $C$ is symmetric and $G$-equivariant). By the definition of $\gamma$, we have $|B(\rho(n)) \cap C(1, t)|\le \gamma(\rho(n))$. It follows that there are at most $\gamma(\rho(n))$ elements $g$ satisfying (\ref{t}) for any fixed $t$ and $x$. Thus, we have $C_{s,t}\le \gamma(\rho(n))^2$.

Finally, we obtain
$$
\| ab\|_2^2\le \gamma(\rho(n)) \sum\limits_{s\in G} \sum\limits_{t\in G} C_{s,t} \alpha_s^2\beta_{t}^2 \le \gamma(\rho(n))^3 \sum\limits_{s\in G} \sum\limits_{t\in G} \alpha_s^2\beta_{t}^2 = \gamma(\rho(n))^3 \|a\|_2^2 \| b\|_2^2.
$$
\end{proof}

We are now ready to prove our main result.

\begin{proof}[Proof of Proposition \ref{CR}]
Given an element $f=\sum\limits_{g\in G} \phi_g g \in \mathbb CG$, we define $f^+  =\sum\limits_{g\in G} |\phi _g| g$. Since $\mathbb{C}G$ is dense in $\ell^2(G)$ and $\|ab\|_2 = \|(ab)^+\|_2\leq \| a^+b^+\|_2$ for every $b\in \mathbb CG$, we have

$$
\| a\| = \sup\limits_{b\in \mathbb CG \setminus \{ 0\}} \frac{\|ab\|_2}{\|b\|_2} \le \sup\limits_{b\in \mathbb CG \setminus \{ 0\}} \frac{\|a^+b^+\|_2}{\|b^+\|_2} = \sup_{c\in \mathbb R_+G\setminus \{ 0\}} \frac{\|a^+ c\|_2}{\| c\|_2}.
$$
Applying Lemma \ref{r+} to the elements $a^+$ and $c$  we obtain that
$$\|a\|\leq \gamma(\rho(n))^{3/2} \|a^+\|_2 = \gamma(\rho(n))^{3/2} \|a\|_2.$$

The claim about the spectral radii is an immediate consequence of (\ref{||a||}) and the definitions of $r(a)$ and $r_2(a)$. Indeed, assume that $S$ is a subsemigroup of $G$. Then $supp(a^k)\subseteq S\cap B(nk)$ for all $k\in \mathbb N$ since $\ell$ satisfies the triangle inequality. Applying  (\ref{||a||}) to the element $a^k$, we obtain
$$
r(a)=\lim_{k\to \infty}\sqrt[k]{\| a^k\|}\le \limsup_{k\to \infty}\sqrt[k]{\gamma(\rho(nk))\| a^k\|_2} = \limsup_{k\to \infty}\sqrt[k]{\| a^k\|_2}=r_2(a).
$$
Combining this with (\ref{r2r}) we obtain $r_2(a)=r(a)$.
\end{proof}

\section{Preliminaries on hyperbolically embedded subgroups}

We begin by recalling necessary definitions and results on acylindrically hyperbolic groups and their hyperbolically embedded subgroups. Our main references are \cite{DGO} and \cite{Osi16}. Terminology and technical tools discussed here go back to \cite{Osi06} and \cite{Osi07}, where they were developed in the particular case of relatively hyperbolic groups.

Let $\mathcal A$ be a set, which we refer to as an \emph{alphabet}, given together with a (not necessarily injective) map $\alpha\colon \mathcal A\to G$ to a group $G$. We say that $\mathcal A$ is a \emph{generating alphabet} of $G$ if $G$ is generated by $\alpha(\mathcal A)$. Note that a generating set $X\subseteq G$ can be thought of as a generating alphabet with the obvious injection $X\to G$.

The \emph{Cayley graph} of $G$ with respect to a generating alphabet $\mathcal A$, denoted $\Gamma (G, \mathcal A)$, is a graph with vertex set $G$ and the set of edges defined as follows. For every $a\in \mathcal A$ and every $g\in G$, there is an oriented edge $(g, g\alpha(a))$ in $\Gamma (G, \mathcal A)$ labelled by $a$. Given a combinatorial path $p$ in $\Gamma (G, \mathcal A)$, we denote by $\lab (p)$ its label. Note that if $\alpha $ is not injective, $\Gamma (G, \mathcal A)$ may have multiple edges.

Suppose now that we have a group $G$, a subgroup $H$ of $G$, and a relative generating set $X$ of $G$ with respect to $H$; that is, we assume that $X$ and $H$ together generate $G$. We think of $X$ and $H$ as abstract sets and consider the disjoint union
\begin{equation}\label{calA}
\mathcal A= X \sqcup H,
\end{equation}
and the map $\alpha \colon \mathcal A\to G$  induced by the inclusions $X\to G$ and $H\to G$. By abuse of notation, we do not distinguish between subsets $X$ and $H$ of $G$ and their preimages in $\mathcal A$. This will not create any problems since the restrictions of $\alpha$ on $X$ and $H$ are injective. Note, however, that $\alpha $ is not necessarily injective. Indeed if $X$ and $H$ intersect in $G$, then every element of $H \cap X\subseteq G$ will have at two preimages in $\mathcal A$: one in $X$ and another in $H$ (since the union in (\ref{calA}) is disjoint).

\begin{conv}
Henceforth we always assume that generating sets and relative generating sets are symmetric. That is, if $x\in X$, then $x^{-1}\in X$. In particular, every element of $G$ can be represented by a word in $\mathcal A$.
\end{conv}

In these settings, we consider the Cayley graphs $\G $ and $\Gamma (H, H)$ and naturally think of the latter as a subgraph of the former. We introduce a (generalized) metric $$\dl \colon H \times H \to [0, +\infty]$$ by letting $\dl (h,k)$ be the length of a shortest path in $\G $ that connects $h$ to $k$ and contains no edges of $\Gamma (H, H)$. If no such a path exists, we set $\dl (h,k)=\infty $. Clearly $\dl $ satisfies the triangle inequality, where addition is extended to $[0, +\infty]$ in the natural way.

\begin{defn}\label{hedefn}
A subgroup $H$ of $G$ is \emph{hyperbolically embedded  in $G$ with respect to a subset $X\subseteq G$}, denoted $H \h (G,X)$, if the following conditions hold.
\begin{enumerate}
\item[(a)] The group $G$ is generated by $X$ together with $H$ and the Cayley graph $\G $ is hyperbolic, where $\mathcal A=X\sqcup H$.
\item[(b)] Any ball (of finite radius) in $H$ with respect to the metric $\dl$ contains finitely many elements.
\end{enumerate}
Further we say  that $H$ is \emph{hyperbolically embedded} in $G$ and write $H\h G$ if $H\h (G,X)$ for some $X\subseteq G$.
\end{defn}

\begin{figure}
  % Requires \usepackage{graphicx}
 \begin{center}
  \scalebox{0.9}{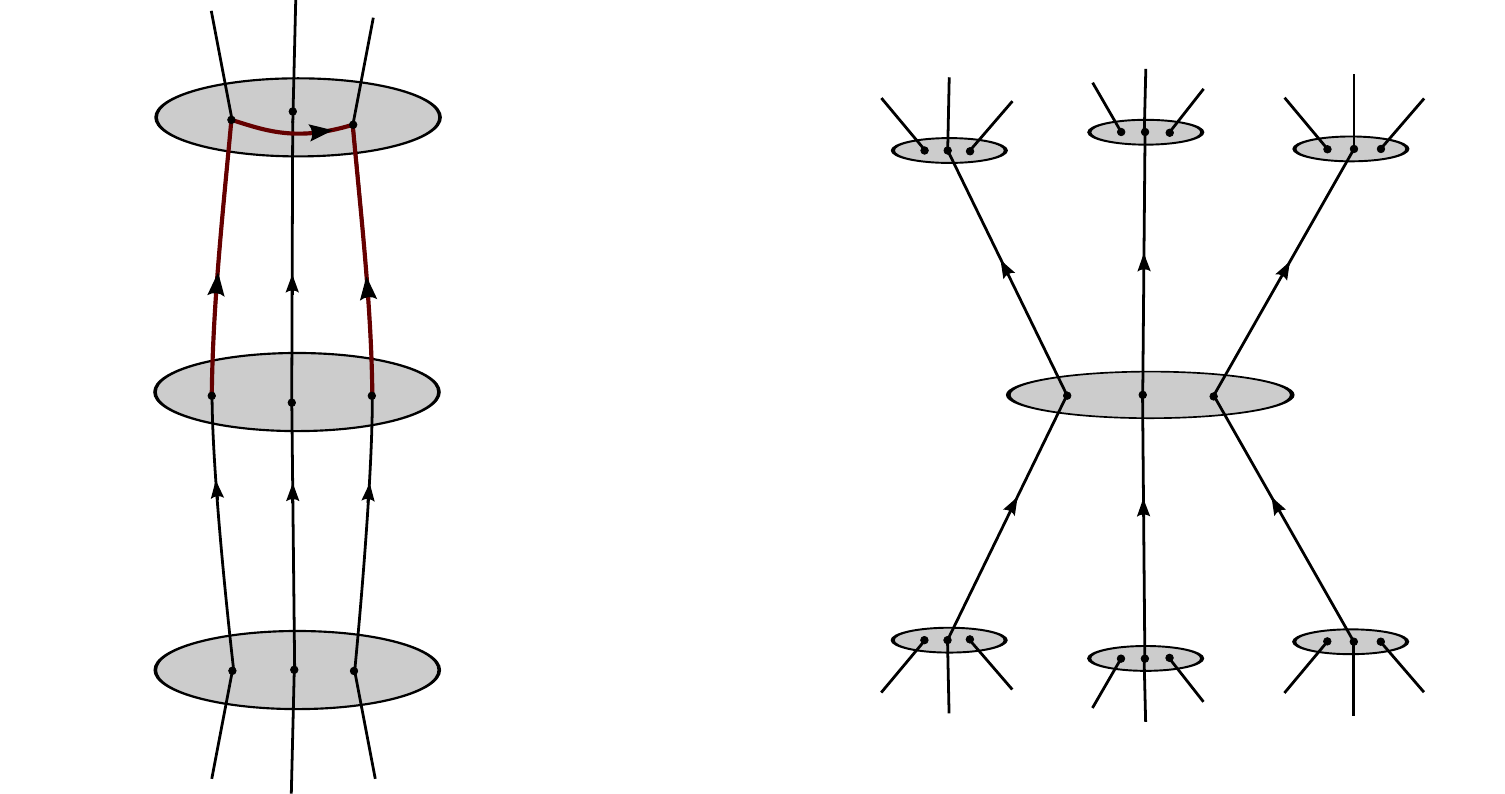}
  \caption{Cayley graphs $\Gamma(G, \mathcal A )$ for $G=H\times \mathbb Z$ and $G=H\ast \mathbb Z$.}\label{fig00}
 \end{center}
\end{figure}

Note that for any group $G$ we have $G\h G$.  Indeed we can take $X=\emptyset $ in this case. Further, if $H$ is a finite subgroup of a group $G$, then $H\h G$. Indeed $H\h (G,X)$ for $X=G$. These two cases are usually referred to as \emph{degenerate}. Since the notion of a hyperbolically embedded subgroup and the metrics $\dl$ play important roles in our paper, we consider two additional examples borrowed from \cite{DGO}.

\begin{ex}\label{bex}
\begin{enumerate}
\item[(a)]
Let $G=H\times \mathbb Z$, $X=\{ x\} $, where $x$ is a generator of $\mathbb Z$. Then $\Gamma (G, \mathcal A )$ is quasi-isometric to a line and hence it is hyperbolic. However, every two elements $h_1, h_2\in H$ can be connected by a path of length at most $3$ in $\G$ that avoids edges of $\Gamma _H=\Gamma (H,H)$ (see Fig. \ref{fig00}). Thus $H\not\h (G,X)$ whenever $H$ is infinite.

\item[(b)]  Let $G=H\ast \mathbb Z$, $X=\{ x\} $, where $x$ is a generator of $\mathbb Z$. In this case $\Gamma (G, \mathcal A )$ is quasi-isometric to a tree and no path connecting $h_1, h_2\in H$ and avoiding edges of $\Gamma_H=\Gamma (H,H)$ exists unless $h_1=h_2$ (see Fig. \ref{fig00}). Thus $H\h (G,X)$.
\end{enumerate}
\end{ex}

The idea behind the first example can also be used to prove the following more general result.

\begin{prop}[{\cite[Proposition 2.10]{DGO}}]\label{malnorm}
Let $G$ be a group and let $H\h G$. Then for every $g\in G\setminus H$, we have $|g^{-1}Hg\cap H|<\infty$.
\end{prop}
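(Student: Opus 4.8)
My plan is to argue by contradiction. Fix $X$ with $H\h (G,X)$, so that $\G$ is $\delta$-hyperbolic for some $\delta$ and, by part (b) of Definition \ref{hedefn}, every ball of $(H,\dl)$ of finite radius is finite. Suppose $g^{-1}Hg\cap H$ were infinite for some $g\in G\setminus H$. For each $k$ in this set write $h:=gkg^{-1}\in H$; the map $k\mapsto h$ is injective, so it suffices to produce a constant $N$, depending only on $g$ and $\delta$, with $\dl (1,h)\le N$ for all such $h$. Indeed, all these $h$ would then lie in a single finite $\dl$-ball, a contradiction. Set $L=\d (1,g)$ and fix once and for all a geodesic $p=[1,g]$ in $\G$; note that $L$ depends only on $g$.

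Given $h=gkg^{-1}$, I consider the geodesic quadrilateral with vertices $1,\,g,\,gk,\,h$. Since $gk=hg$, the side $[g,gk]$ is the single edge labelled by $k\in H$, the side $[h,1]$ is the single edge labelled by $h^{-1}\in H$, and the remaining two sides are $p=[1,g]$ and its left translate $h\cdot p=[h,gk]$. The key point to exploit is that, because $g\notin H$, both $g$ and $gk$ lie outside $H$ (if $gk\in H$ then $g\in H$), so the crossing edge $[g,gk]$ is \emph{not} an edge of $\Gamma(H,H)$ and is therefore admissible in paths computing $\dl$. Reading the quadrilateral as a loop, I obtain a path from $1$ to $h$ consisting of $p$, followed by the admissible edge $[g,gk]$, followed by the translate $h\cdot p$ traversed back to $h$. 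Its length is $2L+1$, which already depends only on $g$; the only thing that can spoil admissibility is the presence of $\Gamma(H,H)$-edges inside $p$ (each of which also occurs, $h$-translated, inside $h\cdot p$).

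Controlling these $\Gamma(H,H)$-edges is the heart of the matter and the place where hyperbolic embeddedness is used beyond plain hyperbolicity. First, a geodesic of $\G$ cannot contain two consecutive edges of $\Gamma(H,H)$: such a pair would join three vertices of $H$ and could be shortcut by a single $H$-edge, contradicting minimality of $p$. Hence each $\Gamma(H,H)$-edge of $p$ is its own $H$-component of $p$, and I would invoke the standard estimate for hyperbolically embedded subgroups that isolated $H$-components of geodesic polygons in $\G$ have uniformly bounded $\dl$-length, say bounded by some $D=D(\delta)$ (making the notion of isolation precise, and excluding the degenerate connections to the two short $H$-sides, is part of this technical step). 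This lets me replace each such edge $e=[e_-,e_+]$ by an admissible path of length $\dl (e_-,e_+)\le D$, and likewise its $h$-translate by a path of the same length, using left-invariance of $\dl$. Splicing in these detours turns the loop above into a genuinely admissible path from $1$ to $h$ of length at most $2L+1+2LD=:N$, a quantity depending only on $g$ and $\delta$. Thus $\dl (1,h)\le N$.

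Finally, part (b) of Definition \ref{hedefn} ensures that $\{\,h\in H:\dl (1,h)\le N\,\}$ is finite, while by construction it contains all the elements $gkg^{-1}$ with $k\in g^{-1}Hg\cap H$; since distinct $k$ give distinct conjugates, $g^{-1}Hg\cap H$ is finite, as required. I expect the bounded-$\dl$-length estimate for isolated $H$-components in the previous paragraph to be the one genuinely technical input; the rest is the elementary geometry of the quadrilateral together with careful bookkeeping of which edges of $\G$ are admissible for the metric $\dl$.
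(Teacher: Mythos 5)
Your skeleton is exactly the intended argument: the paper offers no proof of this proposition beyond citing \cite{DGO} and remarking that the idea of Example \ref{bex}(a) suffices, and that idea is precisely your quadrilateral $1,g,gk,h$ with $gk=hg$, the crossing edge $[g,gk]$ being admissible because $g,gk\notin H$, together with local finiteness of $\dl$-balls from Definition \ref{hedefn}(b). However, the step where you control the $\Gamma(H,H)$-edges of $p$ via the isolated-component estimate is a genuine gap, not deferred bookkeeping. A $\Gamma(H,H)$-edge of $p$ has both endpoints in $H$; so do the side $[h,1]$ and the $h$-translated edges inside $h\cdot p$. Since two $H$-components are connected exactly when their vertices lie in a common left coset of $H$, all of these components are pairwise connected (all their vertices lie in the coset $H$ itself), so \emph{none} of them is isolated in your polygon, and Proposition \ref{n-gon} gives no bound on their $\dl$-lengths; the connections to the two short $H$-sides are not a degenerate case to be excluded but the very obstruction. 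Worse, the detours you want to splice in need not exist at all: $\dl(e_-,e_+)$ can be infinite. Concretely, in Example \ref{bex}(b) with $G=H\ast\mathbb Z$, take $g=h'x$ for some $h'\in H\setminus\{1\}$; a geodesic from $1$ to $g$ may begin with the $\Gamma(H,H)$-edge from $1$ to $h'$, and there $\dl(1,h')=\infty$. So no constant $D=D(\delta)$ of the kind you invoke exists (a smaller point: even for genuinely isolated components, the constant of Proposition \ref{n-gon} depends on the triple $(G,X,H)$, not only on $\delta$).

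The repair is elementary and removes the need for Proposition \ref{n-gon}, and indeed for hyperbolicity, entirely. If $v\ne 1$ is a vertex of the geodesic $p$ lying in $H$, then the single edge of $\G$ labelled by $v\in H$ joins $1$ to $v$, so $v$ is at distance $1$ from $1$ and must be the second vertex of $p$; hence the vertices of $p$ in $H$ are $1$ and at most one further vertex $h_0$, and $p$ contains at most one $\Gamma(H,H)$-edge, necessarily its first edge $[1,h_0]$. Now replace $g$ by $g_0=h_0^{-1}g$: this changes nothing, since $g_0\notin H$ and $g^{-1}Hg\cap H=g_0^{-1}Hg_0\cap H$, while the translate $h_0^{-1}p_0$ of the remaining segment $p_0$ of $p$ is a geodesic from $1$ to $g_0$ all of whose vertices except $1$ lie outside $H$, so it contains no $\Gamma(H,H)$-edges at all; the same is then true of its $h$-translate. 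After this normalization your path $p\cdot[g,gk]\cdot(h\cdot p)^{-1}$ is already admissible, giving $\dl(1,h)\le 2L+1$ for every $h=gkg^{-1}$ with $k\in g^{-1}Hg\cap H$, and Definition \ref{hedefn}(b) yields the contradiction exactly as you intended.
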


Hyperbolically embedded subgroups were introduced and studied in \cite{DGO} as a generalizaton of relatively hyperbolic groups; indeed, $G$ is hyperbolic relative to $H$ if and only if $H\h(G, X)$ for some $|X|<\infty$ \cite[Proposition 4.28]{DGO}. Other non-trivial examples occur in acylindrically hyperbolic groups. In fact, a group $G$ is acylindrically hyperbolic if and only if it contains a non-degenerate hyperbolically embedded subgroup \cite{Osi16}.

In this paper, we will use some special hyperbolically embedded subgroups constructed in \cite{DGO}. Recall that every acylindrically hyperbolic group contains a maximal finite normal subgroup $K(G)\le G$, called the \emph{finite radical} of $G$. This fact, as well, as the result below, are proved in \cite[Theorem 2.24]{DGO}.

\begin{thm}\label{Eg}
Let $G$ be an acylindrically hyperbolic group with trivial finite radical. Then for every $n\in \mathbb N$, there exists a free subgroup $F\le G$ of rank $n$ such that $F\h G$.
\end{thm}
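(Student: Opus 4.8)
The plan is to realize a free group inside $G$ using loxodromic isometries of a hyperbolic space on which $G$ acts, and then to upgrade freeness to hyperbolic embeddedness by invoking the theory of elementary subgroups, with the hypothesis $K(G)=\{1\}$ entering precisely in order to kill torsion. First I would unwind the definition: since $G$ is acylindrically hyperbolic, fix a non-elementary acylindrical action of $G$ on a hyperbolic space $S$. Non-elementarity supplies loxodromic isometries, and a standard pigeonholing argument (playing a single loxodromic element off against its conjugates) yields, for every $n$, a collection $g_1,\dots,g_n$ of pairwise \emph{independent} loxodromics, i.e.\ elements whose fixed-point pairs $\{g_i^{+\infty},g_i^{-\infty}\}\subseteq\partial S$ are pairwise disjoint. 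Moreover, using the structure theory of elementary subgroups from \cite{DGO}, I would take the $g_i$ to be \emph{special}, so that the finite part of the maximal elementary subgroup $E(g_i)$ is exactly the finite radical $K(G)$; passing to the orientation-preserving subgroup of index at most two if necessary and using $K(G)=\{1\}$, this makes each $E(g_i)$ torsion-free, hence $E(g_i)\cong\mathbb Z$.

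The key external input is that independence forces hyperbolic embeddedness of the whole family: $\{E(g_1),\dots,E(g_n)\}\h(G,X)$ for a suitable $X\subseteq G$ \cite{DGO,Osi16}. By the previous paragraph this is a hyperbolically embedded family of infinite cyclic subgroups. Concretely, condition (a) of Definition \ref{hedefn} asserts hyperbolicity of $\Gamma(G,X\sqcup E(g_1)\sqcup\cdots\sqcup E(g_n))$, while condition (b) asks that the metrics $\dl$ on the $E(g_i)$ have finite balls; the latter is exactly the place where acylindricity of the action is used, since it bounds the number of group elements that move two distant points only slightly.

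To produce the free group I would run a ping-pong argument. Each $g_i$ is loxodromic, hence of infinite order, and acts on $\partial S$ with north--south dynamics. Choosing disjoint neighbourhoods of the $2n$ fixed points and replacing the $g_i$ by sufficiently high powers $f_i:=g_i^{N}$ puts them in free position, so the table-tennis lemma identifies $F:=\langle f_1,\dots,f_n\rangle$ with the free product $\langle f_1\rangle\ast\cdots\ast\langle f_n\rangle\cong F_n$. Thus $F$ is a free subgroup of $G$ of rank $n$, and it is torsion-free by construction.

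The hard part will be to show that this $F$ is hyperbolically embedded, $F\h G$, rather than merely free. Here I would use that $F$ is generated by members of the hyperbolically embedded cyclic family $\{\langle f_1\rangle,\dots,\langle f_n\rangle\}$ placed in free position, and that such a generated subgroup is again hyperbolically embedded: one checks the two conditions of Definition \ref{hedefn} for the alphabet $X\sqcup F$, with hyperbolicity of $\Gamma(G,X\sqcup F)$ inherited from the embedding of the family and properness of $\dl$ on $F$ reduced, once more, to acylindricity. This last verification --- tracking paths in $\Gamma(G,X\sqcup F)$ that avoid the edges of $F$, and bounding via acylindricity the number of elements of $F$ they can connect --- is the genuine technical obstacle, and it is the analogue, for the single free subgroup $F$, of the family statement cited above. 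The consistency of the whole scheme with Proposition \ref{malnorm} is a useful sanity check: a torsion-free $F$ can be hyperbolically embedded only if $G$ has no nontrivial finite normal subgroup forced to lie inside it, which is precisely what $K(G)=\{1\}$ guarantees.
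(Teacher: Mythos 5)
There is no internal proof to match here: the paper quotes Theorem \ref{Eg} verbatim from \cite[Theorem 2.24]{DGO}, so your attempt can only be measured against the argument in \cite{DGO}, whose general scaffolding you reproduce correctly (independent special loxodromics $g_i$, joint hyperbolic embedding of the family $\{E(g_1),\dots,E(g_n)\}$, acylindricity feeding condition (b) of Definition \ref{hedefn}, and $K(G)=\{1\}$ forcing $E(g_i)\cong\mathbb Z$). The problem is the final step, which you defer as ``the genuine technical obstacle'': for your specific $F$ it is not merely unproven, it is false. If $f_i=g_i^{N}$ with $N\ge 2$, then $g_i\notin F=\langle f_1,\dots,f_n\rangle$ (a basis element of a free group is not a proper power, and centralizers in free groups are cyclic), while $g_i^{-1}Fg_i\cap F\supseteq\langle g_i^{N}\rangle$ is infinite because $g_i$ centralizes $f_i$. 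By the paper's own Proposition \ref{malnorm}, such an $F$ can never be hyperbolically embedded. Thus the ``sanity check'' you invoke at the end in fact refutes your construction: ping-pong with high powers buys freeness at exactly the price of almost-malnormality. The same mechanism invalidates your side remark about passing to a subgroup of index at most two inside $E(g_i)$: a proper finite-index subgroup of a hyperbolically embedded infinite virtually cyclic subgroup is commensurated by the whole subgroup, hence again fails Proposition \ref{malnorm}; the maneuver is also unnecessary, since specialness together with $K(G)=\{1\}$ gives $E(g_i)=\langle g_i\rangle$ outright.

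The repair, which is how \cite{DGO} actually argues, is to take no powers at all: choose the $g_i$ (for instance suitable conjugates of a single special loxodromic) so that the full family $\{E(g_1),\dots,E(g_n)\}=\{\langle g_1\rangle,\dots,\langle g_n\rangle\}$ is hyperbolically embedded, and let $F$ be the subgroup generated by the union of these \emph{entire} cyclic subgroups. Then both the freeness $F\cong\langle g_1\rangle\ast\cdots\ast\langle g_n\rangle\cong F_n$ and the embedding $F\h G$ are extracted from the isolated-component machinery (the paper's Proposition \ref{n-gon}) applied to paths labelled by alternating words in the $\langle g_i\rangle$ --- the same technique as Lemmas \ref{l2} and \ref{l3} of Section 4 --- rather than from north--south dynamics on $\partial S$. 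With $F$ containing each $E(g_i)$ in full, any $g$ with $|g^{-1}Fg\cap F|=\infty$ already lies in $F$, and the malnormality obstruction that sinks your version disappears.
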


The main technical tool used in our paper is Proposition \ref{n-gon} below. To state it we need two auxiliary definitions. Throughout the rest of this section we assume that $H\h(G,X)$ and use the notation introduced above.

Let $q$ be a path in the Cayley graph $\G $. A (non-trivial) subpath $p$ of $q$ is called an \emph{$H$-subpath}, if the label of $p$ is a word in the alphabet $H$. An $H$-subpath $p$ of $q$ is an {\it $H$-component} if $p$ is not contained in a longer $H$-subpath of $q$; if $q$ is a loop, we require, in addition, that $p$ is not contained in any longer $H$-subpath of a cyclic shift of $q$.

Two $H$-components $p_1, p_2$ of a path $q$ in $\G $ are called {\it connected} if there exists a path $c$ in $\G $ that connects some vertex of $p_1$ to some vertex of $p_2$, and ${\lab (c)}$ is a word consisting entirely of letters from $H$. In algebraic terms this means that all vertices of $p_1$ and $p_2$ belong to the same left coset of $H$. Note also that we can always assume that $c$ is an edge as every element of $H$ is included in the set of generators. An $H$-component of a path $p$ is called \emph{isolated} in $p$ if it is not connected to any other $H$-component of $p$.

It is convenient to extend the relative metric $\dl $ defined above to the whole group $G$ by assuming
\begin{equation}\label{defdl}
\dl (f,g)\colon =\left\{\begin{array}{ll}\dl (f^{-1}g,1),& {\rm if}\;  f^{-1}g\in H \\ \dl (f,g)=\infty , &{\rm  otherwise.}\end{array}\right.
\end{equation}

The following result is a simplified version of \cite[Proposition 4.13]{DGO}. By a \emph{geodesic $n$-gon} in $\G$ we mean a loop which is a concatenation of $n$ geodesics; these geodesics are referred to as \emph{sides} of $\mathcal P$. Given a combinatorial path $p$ in $\G$, we denote by $p_-$ and $p_+$ its beginning and ending vertices, respectively.

\begin{prop}\label{n-gon}
Let $G$ be a group, $H$ a subgroup of $G$. Suppose that $H\h (G,X)$ for some $X\subseteq G$ and let $\mathcal A=X\sqcup H$. Then there exists a constant $C$ satisfying the following conditions.   For any geodesic $n$-gon $p$ in $\G$ with sides $p_1, \ldots , p_n$ and any $I\subseteq \{ 1, \ldots, n\}$ such that $p_i$ is an isolated $H$-component of $p$ for all $i\in I$, we have $$\sum_{i\in I} \dl ((p_i)_-, (p_i)_+)\le Cn.$$
\end{prop}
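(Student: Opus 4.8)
The plan is to use the only real hypothesis at our disposal, the $\delta$-hyperbolicity of $\G$ furnished by part (a) of Definition~\ref{hedefn}, together with the isolation condition. First I would normalize the isolated sides. For $i\in I$ the label of $p_i$ is a word in $H$, and since every element of $H$ is a single letter of $\mathcal A$, a geodesic $H$-component is a single edge $[(p_i)_-,(p_i)_+]$ with $(p_i)_-^{-1}(p_i)_+\in H$. Writing $u_i=(p_i)_-$ and $v_i=(p_i)_+$, both endpoints lie in the coset $u_iH$, and $\dl(u_i,v_i)$ is by definition the length of a shortest $u_i$--$v_i$ path in $\G$ that uses no edge of the translated subgraph $u_i\Gamma(H,H)$, i.e. no $H$-edge joining two vertices of $u_iH$.

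The function of the isolation hypothesis is to make alternative routes \emph{admissible}. If $p_i$ is isolated then no $H$-component of the remaining sides lies in the coset $u_iH$; in particular the edge of $p_{i-1}$ entering $u_i$ and the edge of $p_{i+1}$ leaving $v_i$ must be $X$-edges (any $H$-edge incident to $u_i$ would join two points of $u_iH$ and hence be connected to $p_i$), and, more importantly, the whole complementary path $q_i=p_{i+1}\cdots p_np_1\cdots p_{i-1}$ avoids $u_i\Gamma(H,H)$ and is therefore a legitimate competitor for $\dl(u_i,v_i)$. This already gives the crude estimate $\dl(u_i,v_i)\le\ell(q_i)$; the entire difficulty is to turn this perimeter-sized bound into one that sums to $O(n)$.

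The next step is to shortcut $q_i$ using slimness of geodesic polygons in a hyperbolic space. Because $p_i$ is a single edge, $u_i$ and $v_i$ are at distance one, so their closest points on the union of the other sides are a distance $O(\delta)$ apart; this lets me build an admissible detour from $u_i$ to $v_i$ made of two short bridges off the polygon and a short arc running along it, instead of traversing all of $q_i$. Isolation again guarantees that such a detour can be chosen to avoid $u_i\Gamma(H,H)$.

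The main obstacle, and the technical heart of the result (this is exactly \cite[Proposition~4.13]{DGO}, in the tradition of \cite{Osi06}), is to pass from this per-side picture to the \emph{linear} bound $\sum_{i\in I}\dl(u_i,v_i)\le Cn$. Estimating each term on its own and summing is too wasteful, since both the slimness constant of an $n$-gon and the lengths of the sides enter; one instead has to amortize, charging the length spent by all the detours against the $n$ geodesic sides so that the long parallel stretches shared by different detours cancel. Concretely this means converting the hyperbolic (linear) isoperimetric inequality for $\G$ into a count that is linear in the number of sides, and I expect this amortization to be the only genuinely delicate point, the reductions above and the use of isolation being routine.
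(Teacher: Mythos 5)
First, a point of order: the paper does not prove this proposition at all --- it is quoted verbatim as ``a simplified version of \cite[Proposition 4.13]{DGO}'' and used as a black box. So the honest comparison is between your sketch and the (long, technical) argument in \cite{DGO}, and your sketch has genuine gaps precisely at the points where that argument does its real work. Your reductions are fine as far as they go: a geodesic side that is an $H$-component is indeed a single edge, and isolation does imply that the complementary path $q_i$ contains no edge of $u_i\Gamma(H,H)$ (any such edge would lie in an $H$-component of $p$ connected to $p_i$), so the perimeter bound $\dl(u_i,v_i)\le \ell(q_i)$ is correct. But the two steps that carry the proof are asserted, not proved. (1) Your slimness shortcut uses that in a $\delta$-hyperbolic space the closest points of $u_i$ and $v_i$ on the union of the other sides are $O(\delta)$ apart; for geodesic $n$-gons the slimness constant is not uniform in $n$ --- it degrades like $\delta\log n$ --- so even granting everything else, per-side estimates sum to $O(n\log n)$, not $Cn$. (2) More seriously, the claim ``isolation again guarantees that such a detour can be chosen to avoid $u_i\Gamma(H,H)$'' does not follow: isolation constrains only the $H$-components of the polygon $p$ itself, whereas the bridge geodesics you introduce are auxiliary paths that may perfectly well traverse an $H$-edge with both endpoints in the coset $u_iH$. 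Ruling this out, or surgering around such edges without losing control of length, is exactly the crux; in \cite{DGO} it is handled by a dedicated construction (a modified length on paths and an induction in which the isolated components are kept isolated through the surgeries), not by a direct appeal to thin polygons.

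Finally, your last paragraph concedes that the passage from per-side bounds to the linear bound $\sum_{i\in I}\dl((p_i)_-,(p_i)_+)\le Cn$ is the delicate point and identifies it with \cite[Proposition 4.13]{DGO} --- but that proposition \emph{is} the statement to be proved, so deferring the amortization to it makes the proposal circular rather than incomplete in a repairable way. A self-contained proof would have to supply this amortization explicitly (in the tradition of \cite{Osi06}, e.g. via an isoperimetric or diagram-counting argument in which the cost of all detours is charged to the $n$ sides simultaneously), and nothing in the sketch indicates how the long stretches shared by different detours actually cancel. As written, the proposal establishes only the trivial perimeter-sized bound for each isolated side and a heuristic for improving it; the theorem itself remains unproved.
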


\section{Generalized combings in acylindrically hyperbolic groups}

Recall that every acylindrically hyperbolic group contains a unique maximal finite normal subgroup $K(G)$ called the \emph{finite radical} of $G$ \cite[Theorem 6.14]{DGO}. The main goal of this section is to prove the following.

\begin{prop}\label{combing}
Let $G$ be an acylindrically hyperbolic group with trivial finite radical, $F$ a non-empty finite subset of $G$. Then there exists a length function $\ell$ on $G$, an element $t\in G$, and a symmetric $G$-equivariant generalized combing $C\colon G\times G\to \mathcal P(G)$ satisfying the following conditions.
\begin{enumerate}
\item[(a)] The set $tF$ freely generates a free subsemigroup $S$ of $G$.
\item[(b)] $C(1,s) \cap C(s,g) \cap C(1,g) \neq \emptyset$ for all $g\in G$ and $s\in S$.
\item[(c)] The associated growth functions $\gamma$ and $\rho$ computed with respect to the length function $\ell$ (see (\ref{gamma}) and (\ref{delta})) are bounded by a linear function from above.
\end{enumerate}
\end{prop}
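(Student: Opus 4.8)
The plan is to build everything from a single hyperbolically embedded free subgroup. Using Theorem \ref{Eg}, I fix a free subgroup $H\le G$ of rank at least $2$ with $H\h (G,X)$ for some $X\subseteq G$, and set $\mathcal A=X\sqcup H$, so that $\G$ is hyperbolic; let $C$ be the constant provided by Proposition \ref{n-gon}. I take $\ell$ to be the word length on $G$ with respect to $\mathcal A$, i.e. $\ell(g)=\d_{\mathcal A}(1,g)$, which is a genuine length function since $\mathcal A$ is symmetric and $\ell(g)=0$ iff $g=1$. The combing will be defined through chosen geodesics of $\G$: I fix for every $g\in G$ a geodesic from $1$ to $g$, let $C(1,g)$ be its vertex set, and extend by $C(x,y)=xC(1,x^{-1}y)$. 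Choosing one geodesic per pair $\{g,g^{-1}\}$ and declaring the geodesic to $g^{-1}$ to be the reverse $G$-translate of the one to $g$ (and a self-reverse geodesic when $g=g^{-1}$) makes $C$ simultaneously $G$-equivariant and symmetric, the compatibility condition being exactly $C(1,g)=gC(1,g^{-1})$.

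With this choice condition (c) is immediate and independent of the rest: a geodesic issuing from $1$ meets the ball $B(n)$ in at most its first $n+1$ vertices, so $\gamma(n)\le n+1$; and if $g\in B(n)$ then every vertex of $C(1,g)$ lies in $B(n)$, so $\rho(n)\le n$. Both functions are therefore linear, which is all that the main theorem requires.

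For (a) I would choose $t$ so as to align the translates $tf$, $f\in F$, with the hyperbolically embedded structure. Since $\dl$-balls in $H$ are finite, $H$ contains elements of arbitrarily large $\dl$-length; fixing a \emph{distinguished} letter $h_0\in H$ with $\dl(1,h_0)>3C$ and taking $t=h_0t_0$ for a suitably generic $t_0$, one arranges that each $tf$ has an $\mathcal A$-geodesic of the form $u_f\,h_f\,v_f$ in which $h_f\in H$ is a distinguished $H$-letter, that the concatenation of such blocks spelling any $s=tf_{i_1}\cdots tf_{i_k}\in S$ is again geodesic, and that no two of the letters $h_{f_{i_j}}$ are connected. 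Freeness of the subsemigroup $S$ generated by $tF$ then follows by a ping-pong argument using that the distinguished $H$-letters are isolated and pairwise $\dl$-far apart, so that no positive word can spell the identity; this establishes (a) and simultaneously guarantees that every $C(1,s)$ displays the distinguished letters of $s$ as isolated $H$-components.

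The heart of the proof, and the main obstacle, is (b). Given $s\in S$ and $g\in G$, consider the geodesic triangle with sides $C(1,s)$, $C(s,g)$, $C(1,g)$ and let $e=[u,v]$ be a distinguished $H$-component of $C(1,s)$, so $\dl(u,v)>3C$. Were $e$ isolated in this $3$-gon, Proposition \ref{n-gon} (with $n=3$) would force $\dl(u,v)\le 3C$, a contradiction; hence $e$ is connected to an $H$-component of $C(s,g)$ or of $C(1,g)$, i.e. its endpoints share a left coset of $H$ with a vertex of another side. The delicate point is to promote this \emph{connectedness} to an \emph{actual common vertex}, since connected $H$-components a priori only force the endpoints to be $\dl$-close (again by Proposition \ref{n-gon} applied to the small triangle formed by $e$, the partner component, and the connecting $H$-edges). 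I expect to resolve this by building the combing so that its geodesics traverse each coset of $H$ through canonical vertices determined by the coset alone; with such canonical routing the connected distinguished component of $C(1,s)$ coincides with components of the other two sides, so an endpoint of $e$ lies on all three geodesics and (b) follows. Arranging this canonical, $G$-equivariant and symmetric routing while keeping the paths geodesic — so that the linear bounds of (c) are preserved — is the technical crux I anticipate to be hardest.
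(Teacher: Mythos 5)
Your setup (length function from $|\cdot|_{\mathcal A}$, symmetric equivariant combing via chosen geodesics, the isolation of distinguished $H$-letters via Proposition \ref{n-gon}, and the threshold $\dl(1,t)>$ const$\cdot C$) matches the paper's strategy, and your bounds for (c) are correct for the combing you define. But your resolution of (b) has a genuine gap, and it is exactly where you predicted: a vertex-set-of-a-geodesic combing cannot satisfy (b), and the ``canonical routing through cosets'' repair fails for two concrete reasons. First, a choice of a vertex in each left coset of $H$ that is compatible with the $G$-equivariance you need does not exist: $H$ acts freely and transitively on the coset $H$ itself by left multiplication, so an equivariant basepoint would be a fixed point of this action. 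Second, and more fundamentally, the three sides of the triangle $(1,s,g)$ need not visit any common coset of $H$ at all. In the paper's case analysis (Case 2.c of Lemma \ref{lemC}), there is an index $i$ such that $a_i$ is connected to a component of $C(1,g)$ while $a_{i+1}$ is connected to a component of $C(s,g)$, with the switch happening across the edge $b_i$; the point serving as the ``common'' vertex is an endpoint of $b_i$, which lies on $C(1,s)$ only and is merely at bounded $\dl$-distance (at most $5C$, by Proposition \ref{n-gon} applied to a pentagon) from the other two sides, never on them. No rerouting through coset vertices can create an exact triple intersection in this configuration, and in a hyperbolic (non-tree) Cayley graph three geodesic sides generically share no vertex.

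The paper's fix is to abandon exact geodesic combings and fatten: it sets $\Omega=\{t^{\pm1}\}\cup F^{\pm1}\cup\{h\in H\mid \dl(1,h)\le 5C\}$, which is finite by condition (b) of Definition \ref{hedefn}, and defines $C(1,g)=V(\gamma_g\cup g\gamma_{g^{-1}})\cdot\Omega\cdot\Omega$, i.e.\ the $\Omega^2$-neighborhood of a union of two geodesics (the union makes symmetry automatic, playing the role of your paired-geodesic choice). Then (b) only requires a vertex of $\gamma_s$ to be $\Omega^2$-close to the other two sides, which is exactly what the $5C$ bounds from Proposition \ref{n-gon} deliver, and linearity in (c) survives since $|\Omega|$ is a constant ($\gamma(n)\le 2(n+M+1)|\Omega|^2$, $\rho(n)\le n+M$). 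Separately, your treatment of (a) is underpowered: ``suitably generic $t_0$'' hides the real work. The paper first proves (Lemma \ref{l1}, via Theorem \ref{Eg}, a counting argument, and Proposition \ref{malnorm}) that one can choose an \emph{infinite cyclic} $H\h G$ with $FHF^{-1}\cap H=\{1\}$, takes $t\in H$ with $\dl(1,t)>5C$ (note: $5C$, not $3C$ --- the pentagon in Case 2.c needs the larger constant), and arranges $F\subseteq X$; the identity $FHF^{-1}\cap H=\{1\}$ is then used both to show each $t$-edge is an $H$-component (Lemma \ref{l2}) and to extract freeness of the semigroup from the fellow-traveling rigidity of Lemma \ref{l3} (Lemma \ref{l4}). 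Your ping-pong sketch would need an analogous interaction hypothesis between $F$ and $H$ to rule out, e.g., cancellations of the form $f h f'^{-1}\in H$, and with $t=h_0t_0\notin H$ the component structure of your blocks is not established.
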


In fact, the proposition is also true for $F=\emptyset$. However, the proof in this case is formally different and we exclude the possibility $F=\emptyset$ for the sake of brevity.

Note that it suffices to prove the proposition for a shift $gF$ for some $g\in G$. Since $F$ is finite, there is an element $g\in G\setminus F^{-1}$. Clearly $1\notin gF$ for such $g$. Thus we can assume that
\begin{equation}\label{1F}
1\notin F
\end{equation}
without loss of generality.

The proof will be divided into a sequence of lemmas, all of which are proved under the assumptions of Proposition \ref{combing}. Given a combinatorial path $p$ in a Cayley graph, we denote by $|p|$ its length. We say that a path is \emph{trivial} if it consists of a single point.
\begin{lem}\label{l1}
There exists an infinite cyclic subgroup $H\h G$ such that
\begin{equation}\label{int}
FHF^{-1}\cap H=\{1\}.
\end{equation}
In particular, we have $H \cap F=\emptyset$.
\end{lem}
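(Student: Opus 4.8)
The plan is to realize the desired subgroup as a suitable power of the generator of a cyclic hyperbolically embedded subgroup supplied by Theorem \ref{Eg}, using Proposition \ref{malnorm} to control the relevant conjugates. First I would apply Theorem \ref{Eg} with $n=1$ to obtain an element $a\in G$ of infinite order with $\langle a\rangle \h G$. Since $F$ is finite and $a$ has infinite order, only finitely many powers of $a$ lie in $F$, and by (\ref{1F}) none of them is trivial; hence for large enough $M$ the subgroup $\langle a^M\rangle$ is disjoint from $F$. Replacing $a$ by $a^M$ and using that a finite-index subgroup of a hyperbolically embedded subgroup is again hyperbolically embedded, I may assume from the outset that $H_0:=\langle a\rangle \h G$ with $F\cap H_0=\emptyset$.

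The key step is to prove that $\Phi:=FH_0F^{-1}\cap H_0$ is finite. I would write $\Phi$ as the union of the sets $fH_0f'^{-1}\cap H_0$ over $f,f'\in F$. For the diagonal terms $f=f'$, since $f\notin H_0$, Proposition \ref{malnorm} applied to $f^{-1}$ shows that $fH_0f^{-1}\cap H_0$ is finite. For the off-diagonal terms $f\neq f'$ I would argue by contradiction: if $fH_0f'^{-1}\cap H_0$ were infinite there would be infinitely many integers $q$ admitting $p$ with $fa^{p}f'^{-1}=a^{q}$; dividing two such relations gives $a^{q-q'}=fa^{p-p'}f^{-1}\in fH_0f^{-1}\cap H_0$, which is finite by the diagonal case. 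Thus $q-q'$ ranges over a finite set of exponents, forcing only finitely many admissible $q$, a contradiction. Therefore $\Phi$ is a finite union of finite sets, hence finite.

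Finally, $\Phi$ is a finite subset of the infinite cyclic group $H_0$, so choosing $N$ larger than the absolute values of the exponents of all nontrivial elements of $\Phi$ guarantees that $H:=\langle a^{N}\rangle$ meets $\Phi$ only in $1$. Since $H\leq H_0$, we get $FHF^{-1}\cap H\subseteq \Phi\cap H=\{1\}$, and the reverse inclusion is trivial, which is exactly (\ref{int}); moreover $H\cap F\subseteq H_0\cap F=\emptyset$. The subgroup $H$ is infinite cyclic and, being of finite index in $H_0$, is hyperbolically embedded, so all requirements hold.

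The main obstacle is the finiteness of $\Phi$. The diagonal part is immediate from Proposition \ref{malnorm}, but the off-diagonal part requires combining this malnormality-type statement with the arithmetic of the infinite cyclic group $H_0$. It is essential to have arranged $F\cap H_0=\emptyset$ beforehand: if some $f\in F$ lay in $H_0$ it would centralize $H_0$, whence $fH_0f^{-1}\cap H_0=H_0$ would be infinite and the whole argument would collapse.
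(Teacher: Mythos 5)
There is a genuine gap, and it is fatal to your chosen route: the claim that ``a finite-index subgroup of a hyperbolically embedded subgroup is again hyperbolically embedded'' is false, and you invoke it twice (when replacing $a$ by $a^M$, and again at the end for $H=\langle a^N\rangle$). In fact it is contradicted by the very Proposition \ref{malnorm} you rely on: for $M\ge 2$ the element $a$ normalizes $\langle a^M\rangle$, so $a^{-1}\langle a^M\rangle a\cap \langle a^M\rangle=\langle a^M\rangle$ is infinite while $a\notin\langle a^M\rangle$; hence $\langle a^M\rangle \h G$ would violate the almost malnormality of hyperbolically embedded subgroups. More generally, every infinite subgroup of $H_0=\langle a\rangle$ is of this form, so the only infinite subgroup of $H_0$ that can be hyperbolically embedded in $G$ is $H_0$ itself; consequently no amount of passing to powers inside a single cyclic subgroup can succeed, and if $FH_0F^{-1}\cap H_0\ne\{1\}$ your construction has nowhere to go. (Your intermediate finiteness argument for $\Phi=FH_0F^{-1}\cap H_0$ --- the diagonal case via Proposition \ref{malnorm} and the off-diagonal case obtained by dividing two relations --- is correct as stated, but it cannot be converted into the conclusion, because the shrinking step destroys the property $H\h G$ that the lemma, and the rest of the paper, actually need.)

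The paper avoids exactly this trap by choosing among many independent cyclic subgroups rather than among powers of one. It applies Theorem \ref{Eg} with large rank $n^3$, where $n=|F|+1$, writes the resulting free subgroup $B\h G$ as $B_1\ast\cdots\ast B_n$ with each $B_s$ free of rank $n^2$, picks by pigeonhole a factor $B_m$ with $F\cap B_m=\emptyset$, and then, among the $n^2>|F|^2$ basis elements $b_i$ of $B_m$, rules out simultaneous failure of (\ref{int}): if each $\langle b_i\rangle$ failed via a pair $(f_i,g_i)\in F\times F^{-1}$, two indices would share the same pair, and combining the two bad relations produces a nontrivial element of $f_iB_mf_i^{-1}\cap B_m$, whence $f_i\in B_m$ by Proposition \ref{malnorm}, contradicting $F\cap B_m=\emptyset$. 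The subgroup $\langle b_i\rangle$ finally chosen is a \emph{free factor} of $B_m$, not a finite-index subgroup, so $\langle b_i\rangle\h B_m\h G$ follows from Example \ref{bex}(b) and the transitivity of $\h$ (\cite[Proposition 4.35]{DGO}). If you want to repair your write-up, this is the missing idea: replace ``higher powers'' by ``more basis directions,'' using a high-rank free hyperbolically embedded subgroup so that pigeonhole can select a cyclic free factor satisfying (\ref{int}) outright.
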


\begin{proof}
Let $n=|F|+1$. By Theorem \ref{Eg}, there exists a free subgroup $B\h G$ of rank $n^3$. Let $B=B_1\ast  \cdots \ast B_{n}$, where $B_s$ is free of rank $n^2$ for each $s=1, \ldots , n$. Combining (\ref{1F}) with the observation that $B_s\cap B_t=\{1\}$ whenever $s\ne t$, we conclude that there is $m\in \{1, \ldots, n^2\}$ such that
\begin{equation}\label{Bm}
F\cap B_m=\emptyset.
\end{equation}
It is easy to see that $B_m \h B$ (see Example \ref{bex} (b)). By \cite[Proposition 4.35]{DGO}, being a hyperbolically embedded subgroup is a transitive relation. Therefore, $B_m \h G$.

Let $b_1, \ldots , b_{n^2}$ be a basis of $B_m$. We are going to show that (\ref{int}) holds for some $H=\langle b_i\rangle$. Arguing by contradiction, assume that for every $i=1, \ldots, n^2$, there is $(f_i, g_i)\in F\times F^{-1}$, $k_i\in \mathbb Z$, and $\ell_i\in \mathbb Z\setminus \{0\}$ such that $f_ib_i^{k_i}g_i=  b_i^{\ell_i}$. Since $n^2>|F|^2$, we have $(f_i,g_i)=(f_j,g_j)$ for some $i\ne j$. It follows that
$$
f_ib_i^{k_i}b_j^{-k_j}f_i^{-1} = f_ib_i^{k_i}g_i \cdot (f_jb_j^{k_j}g_j)^{-1} =b_i^{\ell_i}b_j^{-\ell_j}\in B_m\setminus \{1\}.
$$
Applying Proposition \ref{malnorm} to the hyperbolically embedded subgroup $B_m$ we obtain $f_i\in B_m$, which contradicts (\ref{Bm}). Thus, there exists $i$ such that (\ref{int}) holds for $H=\langle b_i\rangle$. As above, we have $H\h B_m\h G$ and, therefore, $H\h G$ by transitivity.
\end{proof}

From now on, we fix a subgroup $H\le G$ satisfying the conclusion of Lemma \ref{l1}. Let $X$ be a generating set of $G$ such that $H\h (G,X)$. The property $H\h (G,X)$ is not sensitive to adding a finite set of elements to $X$ (see \cite[Corollary 4.27]{DGO}) and hence we can assume that
\begin{equation}\label{FX}
F\subseteq X
\end{equation}
without loss of generality. Let $$\mathcal A=X\sqcup H$$ be the associated generating alphabet of $G$. We denote by $\d_{\mathcal A}$ and $|\cdot |_{\mathcal A}$ the corresponding word metric and word length on $G$.

Let $C$ denote the constant provided by Proposition \ref{n-gon}. We fix any $t\in H$ satisfying
\begin{equation}\label{dlt}
\dl (1,t)> 5C;
\end{equation}
such an element exists by condition (b) of Definition \ref{hedefn}.

The next two lemmas are proved under the following common notation and assumptions. Let $w$ be a word in the alphabet $\mathcal A$ of the form $$w=tf_1tf_2\ldots tf_n,$$ where $f_1, \ldots, f_n$ are some letters from $F$ (note that we use the assumption (\ref{FX}) here). Let $p$ denote any path in $\G$ with label $w$. Then $p$ decomposes as
\begin{equation}\label{cd}
p=a_1b_1\ldots a_nb_n,
\end{equation}
where $a_i$, $b_i$ are edges with labels
\begin{equation}\label{aibi}
\lab (a_i)=t,\;\; \lab(b_i)=f_i
\end{equation}
for all $i=1, \ldots, n$. We call (\ref{cd}) the \emph{canonical decomposition} of $p$.

\begin{lem}\label{l2}
For each $i\in \{ 1, \ldots, n\}$, $a_i$ is an isolated $H$-component of $p$.
\end{lem}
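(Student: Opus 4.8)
Let me understand what Lemma \ref{l2} is claiming. We have a word $w = tf_1tf_2\ldots tf_n$ where $t \in H$ (with $\dl(1,t) > 5C$) and each $f_i \in F \subseteq X$. The path $p$ in the Cayley graph $\Gamma(G,\mathcal{A})$ with label $w$ decomposes canonically as $p = a_1 b_1 \ldots a_n b_n$ where each $a_i$ is labeled by $t$ (an element of $H$) and each $b_i$ is labeled by $f_i$ (an element of $X$, specifically of $F$). I need to show each $a_i$ is an isolated $H$-component.

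Let me plan the proof.

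**The plan.** The argument splits into two parts: first showing each $a_i$ is an $H$-component, then showing each is isolated.

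For the $H$-component claim: Each $a_i$ is a single edge labeled by $t \in H$, so it is an $H$-subpath. To be an $H$-component it must be maximal — not contained in a longer $H$-subpath. The edges immediately adjacent to $a_i$ are $b_{i-1}$ (labeled $f_{i-1} \in F$) and $b_i$ (labeled $f_i \in F$). Since $F \cap H = \emptyset$ by Lemma \ref{l1}, the labels $f_{i-1}, f_i$ are letters of $X$ that are not in $H$, so $a_i$ cannot be extended to a longer $H$-subpath. Hence each $a_i$ is an $H$-component.

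**The main work: isolation.** For isolation I must show no $a_i$ is connected to another $H$-component $a_j$ ($j \neq i$). Suppose toward contradiction that $a_i$ and $a_j$ are connected, meaning their endpoints lie in a common left coset of $H$; equivalently, there is an element of $H$ connecting a vertex of $a_i$ to a vertex of $a_j$. I would consider the subpath $q$ of $p$ running from an endpoint of $a_i$ to an endpoint of $a_j$ (say $i < j$), and close it up with a single edge $e$ labeled by the connecting element of $H$. This produces a geodesic polygon — or rather a loop I can analyze. The natural move is to form a loop from $a_i$, the segment of $p$ between them, $a_j$, and the connecting $H$-edge $e$, then apply Proposition \ref{n-gon}. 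The key quantitative input is $\dl(1,t) > 5C$: since each $a_i$ realizes $\dl$-distance exactly $\dl(1,t)$, if $a_i$ and $a_j$ were two isolated components of a polygon with few sides, Proposition \ref{n-gon} would bound their total $\dl$-length by $Cn$ with $n$ small (a bounded number of sides), contradicting $\dl(1,t) + \dl(1,t) > 10C > Cn$. The delicate point will be arranging the loop as a genuine \emph{geodesic} polygon with a controlled number of sides so Proposition \ref{n-gon} applies, and verifying that $a_i, a_j$ remain isolated $H$-components \emph{within that polygon}.

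**The main obstacle.** I expect the hardest step to be the bookkeeping in the isolation argument: correctly building a geodesic polygon of bounded side-count in which the relevant $a_i$'s appear as isolated $H$-components, so that the constant $C$ from Proposition \ref{n-gon} combined with the inequality $\dl(1,t) > 5C$ forces a contradiction. One must be careful that replacing subpaths of $p$ by geodesics does not merge or destroy the components $a_i, a_j$, and that the connecting edge $e$ does not itself create unwanted connections. The choice of the slack factor $5C$ (rather than, say, $2C$) strongly suggests the polygon will have roughly five sides, so I would aim to exhibit a pentagon (or fewer) whose isolated $H$-components include $a_i$ and $a_j$, yielding $2\dl(1,t) \le 5C < 2 \cdot \frac{5C}{2}$, the contradiction. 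This is the technical heart, and I would organize it by carefully naming the geodesic sides and invoking the isolation hypothesis on $p$ itself to control which components can be connected.
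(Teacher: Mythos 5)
Your first part (each $a_i$ is an $H$-component) is fine and agrees with the paper. The isolation argument, however, has a genuine gap at exactly the point you flagged as the technical heart: you propose to build a geodesic polygon in which $a_i$ and $a_j$ both appear as \emph{isolated} $H$-components, and then to contradict $2\dl(1,t)>10C$ via Proposition \ref{n-gon}. This can never be arranged. By hypothesis $a_i$ and $a_j$ are connected, meaning all their vertices lie in a common left coset of $H$; hence in \emph{any} loop that contains both of them as sides, each is connected to the other (and also to the closing edge $e$, whose endpoints lie in that same coset), so neither is isolated, and Proposition \ref{n-gon} says nothing about them. Isolation is relative to the ambient loop, but the connectedness of $a_i$ and $a_j$ persists in every loop containing both.

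The paper's proof avoids this with a minimality trick absent from your plan. One chooses the connected pair $a_i$, $a_j$ ($i<j$) with $k=j-i$ \emph{minimal}, and forms the loop $re$, where $r$ is the segment of $p$ between $(a_i)_+$ and $(a_j)_-$ (so $a_i$ and $a_j$ are \emph{not} sides of the polygon) and $e$ is the $H$-edge from $(a_j)_-$ to $(a_i)_+$. The contradiction is extracted not from $a_i,a_j$ but from the $k-1$ intermediate components $a_{i+1},\dots,a_{j-1}$: by minimality of $k$ they cannot be connected to one another, nor to $e$ (a connection to $e$ would connect them to $a_i$, contradicting minimality again), so they are isolated in the geodesic $2k$-gon with sides $b_i,a_{i+1},b_{i+1},\dots,a_{j-1},b_{j-1},e$. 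Proposition \ref{n-gon} then gives $(k-1)\,\dl(1,t)\le 2Ck$, i.e.\ $\dl(1,t)\le 2Ck/(k-1)\le 4C$ for $k\ge 2$, contradicting (\ref{dlt}). Note also the degenerate case $k=1$, which your plan does not address and in which there are no intermediate components at all: there one concludes directly that $f_i\in H$, contradicting $H\cap F=\emptyset$ from Lemma \ref{l1}. So the role of the constant $5C$ is not a five-sided polygon containing $a_i$ and $a_j$, but the uniform bound $2Ck/(k-1)\le 4C<5C$.
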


\begin{figure}
  % Requires \usepackage{graphicx}
  \begin{center}
  \scalebox{1.25}{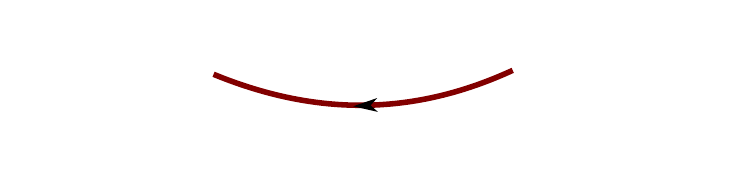}
\caption{Proof of Lemma \ref{l2}.}\label{fig0}
  \end{center}
\end{figure}

\begin{proof}
The claim that every $a_i$ is an $H$-component of $p$ immediately follows from (\ref{int}). Thus we only need to show that all these $H$-components are isolated. Arguing by contradiction, consider a pair of indices $i<j$ with minimal possible value of $k=j-i$ such that $a_i$ is connected to $a_j$. If $k=1$, then we have $f_i\in H$, which contradicts the choice of $H$ (see Lemma \ref{l1}). Thus $k\ge 2$. Let $e$ be the edge labelled by an element of $H$ and connecting $(a_j)_-$ to $(a_i)_+$ in $\G$ (see Fig. \ref{fig0}). Let also $r$ denote the segment of $p$ bounded by $(a_i)_+$ and $(a_j)_-$.

By minimality of $k$, all $H$-components $a_s$ for $i<s<j$ are isolated in the loop $re$. We think of $re$ as a geodesic $2k$-gon with sides $b_i, a_{i+1}, b_{i+1}, \ldots,  a_{j-1}, b_{j-1}, e$. Note that $\dl ((a_s)_-, (a_s)_+)=\dl (1,t)$ for all $s$ (see (\ref{defdl})). Applying Proposition \ref{n-gon} we obtain
$$
\dl (1,t)=\frac{1}{k-1} \sum_{s=i+1}^{j-1} \dl ((a_s)_-, (a_s)_+) \le C \frac{2k}{k-1}\le 4C,
$$
which contradicts (\ref{dlt}).
\end{proof}

\begin{lem}\label{l3}
Let $p$ be as above and let $q$ be a geodesic in $\G$ connecting $p_-$ to $p_+$. Then $q=c_1d_1\ldots c_nd_n$, where $d_i\ne 1$ and $c_i$ is an $H$-component of $q$ connected to $a_i$ for all $i\in \{ 1, \ldots, n\}$. In particular, the path $p$ is geodesic.
\end{lem}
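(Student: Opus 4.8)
The plan is to compare $p$ with the geodesic $q$ by analysing the loop $L=pq^{-1}$, regarded as a geodesic polygon whose sides are the individual edges $a_1,b_1,\dots,a_n,b_n$ of $p$ together with $q^{-1}$. Two structural facts organise everything. First, by Lemma \ref{l2} each $a_i$ is an isolated $H$-component of $p$, and since $f_i\in F$ and $F\cap H=\emptyset$ (Lemma \ref{l1}) none of the $b_i$ is an $H$-component; so every $H$-component of $L$ is either some $a_i$ or an $H$-component of $q$. Second, in a geodesic no two distinct $H$-components can be connected: if they were, the subpath of $q$ joining them would connect two vertices of a common left coset of $H$ and could be replaced by a single $H$-edge, contradicting minimality of $|q|$. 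In particular each $H$-component of $q$ is connected to at most one $a_i$, and since the $a_i$ lie in pairwise distinct cosets of $H$ by Lemma \ref{l2} (using $(a_k)_-=p_-\prod_{l<k}(tf_l)$), distinct $a_i$ are connected to distinct $H$-components of $q$. Each $H$-component of a geodesic is moreover a single edge.

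The heart of the argument is to prove that \emph{every} $a_i$ is connected to some $H$-component $c_i$ of $q$; this is where the margin $\dl(1,t)>5C$ from (\ref{dlt}) is spent via Proposition \ref{n-gon}. Suppose some $a_i$ were isolated in $L$, and let $a_i,\dots,a_{i+r-1}$ be a maximal block of consecutive $a$'s that fail to connect to $q$. I would enclose this block in a sub-polygon of $L$ by joining $(a_i)_-$ and $(a_{i+r-1})_+$ to the nearest connection points on $q$ by two geodesics and closing up along the intervening subarc of $q$; counting sides gives a geodesic polygon with about $2r+2$ sides in which the $r$ edges of the block remain isolated $H$-components. Proposition \ref{n-gon} then gives $r\,\dl(1,t)\le C(2r+2)$, which with $\dl(1,t)>5C$ yields $5r<2r+2$, impossible for $r\ge 1$. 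The hard part will be the bookkeeping behind this side count: the two closing geodesics can themselves begin or end with an $H$-edge lying in the coset of $a_i$ or of $a_{i+r-1}$, which would connect to the block and spoil isolation. One must argue, using the ``distinct $H$-components of a geodesic are unconnected'' fact together with the coset identities above, that such boundary $H$-edges can be absorbed without raising the side count past $2r+2$, so that the $5C$ margin still forces the block to be empty. I expect this to be the main obstacle.

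Once each $a_i$ has its partner $c_i$, I would pin down the configuration. The matching $a_i\leftrightarrow c_i$ must be order preserving: an order violation would, via the coset identities and a geodesic shortcut argument, force two $H$-components of $q$ into the same coset, contradicting the fact above. Since $q_-=p_-$ lies in the coset $p_-H$ of $c_1$, the initial subarc of $q$ up to $c_1$ connects two points of $p_-H$ and must be trivial, so $c_1$ is the first edge of $q$; writing $d_i$ for the arc between $c_i$ and $c_{i+1}$ (and $d_n$ for the terminal arc) gives $q=c_1d_1\cdots c_nd_n$. Each $d_i$ is non-trivial because $c_i$ and $c_{i+1}$ lie in distinct cosets, so the arc between them must contain a non-$H$ edge; for $d_n$ this follows from $p_-^{-1}p_+=tf_1\cdots tf_n\notin H$.

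Finally, the ``in particular'' is a length count. Each $c_i$ is a single edge and each $d_i$ has length at least $1$, so $|q|\ge 2n=|p|$; as $q$ is geodesic we also have $|q|\le|p|$, hence $|q|=|p|$ and $p$ is geodesic. With no slack remaining, each $d_i$ is in fact a single (non-$H$) edge and $q$ has no $H$-components beyond $c_1,\dots,c_n$, matching the asserted decomposition. The order and length steps are comparatively routine; the genuine difficulty is the existence of partners and, inside it, the control of the auxiliary geodesics' boundary $H$-components so that the isolated-component estimate of Proposition \ref{n-gon} bites.
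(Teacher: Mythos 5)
Your overall architecture (match each $a_i$ to an $H$-component of $q$ via Proposition \ref{n-gon}, then order the matching, then count lengths) is the right one, but the step you yourself flag as the main obstacle is a genuine gap, and it is exactly where your route diverges from the paper's. The paper proves existence of the partners $c_i$ by induction on $n$, scanning left to right: at step $i$ it forms a polygon with sides $b_i$, $a_{i+1}$, $b_{i+1}a_{i+2}\cdots a_nb_n$, $(q')^{-1}$ and one $H$-edge $f_i$, and the whole suffix $b_{i+1}a_{i+2}\cdots a_nb_n$ counts as a \emph{single geodesic side} because the inductive hypothesis (the lemma for the shorter word $tf_{i+1}\cdots tf_n$, restricted to a subpath) says it is geodesic. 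That is what keeps the polygon a pentagon, so an isolated $a_{i+1}$ would give $\dl(1,t)\le 5C$, contradicting (\ref{dlt}). Your non-inductive maximal-block argument has no such device: since longer subpaths of $p$ are not yet known to be geodesic, each edge of $p$ must be its own side. For an interior block $a_i,\dots,a_{i+r-1}$ the enclosing polygon then has sides $e'$, $b_{i-1}$, $a_i, b_i, \dots, a_{i+r-1}, b_{i+r-1}$, $e''$, plus the subarc of $q$ --- that is $2r+4$ sides, not ``about $2r+2$'' (you get $2r+2$ or $2r+3$ only when the block touches an endpoint of $p$). Proposition \ref{n-gon} then yields $5Cr < r\,\dl(1,t) \le C(2r+4)$, i.e.\ $3r<4$, which rules out $r\ge 2$ but \emph{not} $r=1$: for an interior singleton $a_i$ you get only $\dl(1,t)\le 6C$, perfectly consistent with $\dl(1,t)>5C$. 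So the case of a single unmatched $a_i$ in the interior of $p$ survives your argument.

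The boundary $H$-edges are, incidentally, not the real problem: if you close the block using the $H$-edges $e',e''$ to the partner components of the adjacent $a_{i-1}$, $a_{i+r}$, these lie in the cosets of $a_{i-1}$ and $a_{i+r}$, so by Lemma \ref{l2} they cannot connect to block members, and they cannot connect to components of the $q$-subarc either (two components of the geodesic $q$ are never connected). Isolation is fine; only the side count is short, and the interleaved $b$-edges cannot be absorbed without already knowing subwords of $w$ label geodesics --- which is precisely what the paper's induction supplies. Two honest repairs: reorganize your argument as the paper's induction on $n$ (processing one $a_{i+1}$ at a time against the geodesic suffix), or observe that $t$ was chosen freely in $H$ and strengthen (\ref{dlt}) to $\dl(1,t)>7C$, after which $7r>2r+4$ kills all $r\ge1$; but the lemma as stated lives under the $5C$ hypothesis, so as written your proof does not close. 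The remaining parts of your proposal (order-preservation, $c_1$ being the first edge, non-triviality of the $d_i$, and the final count $|q|\ge 2n=|p|$ forcing $p$ geodesic) agree in substance with the paper's endgame and are fine modulo routine detail.
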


\begin{proof}
We prove the lemma by induction on $n$. The loop $pq^{-1}$ can be thought of as a geodesic quadrilateral with sides $q^{-1}$, $a_1$, $b_1$, and $a_2b_2\ldots b_na_n$ (the last side is geodesic by the inductive assumption if $n>1$ and reduces to a point if $n=1$). By Proposition \ref{n-gon} and (\ref{dlt}), $a_1$ cannot be an isolated $H$-component in $pq^{-1}$. By Lemma \ref{l2}, $a_1$ cannot be connected to another $H$-component of $p$; therefore, it is connected to an $H$-component $c_1$ of $q$. As $q$ is geodesic, $c_1$ must be the first edge of $q$.

If $n=1$, we obtain $q=c_1d_1$. Clearly, where $d_1\ne 1$ as otherwise we would have $t\in H$, which contradicts the choice of $H$ (see Lemma \ref{l1}). If $n>1$, we continue as follows. Assume that for some $1\le i<n$, we have $q=c_1d_1\ldots c_iq^\prime$, where $c_1, \ldots, c_i$ are $H$-components of $q$ connected to $a_1, \ldots a_i$, respectively. Let $f_i$ be the edge labeled by an element of $H$ (or the trivial path) and connecting $(c_i)_+$ to $(a_i)_+$. Applying Proposition \ref{n-gon} to the geodesic pentagon with sides $b_{i}$, $a_{i+1}$, $b_{i+1}\ldots a_nb_n$, $(q^\prime)^{-1}$, $f_i$, we conclude that $a_{i+1}$ cannot be isolated in it. By Lemma \ref{l2}, $a_{i+1}$ is connected to an $H$-component $c_{i+1}$ of $q^\prime$. By induction, we obtain that $q=c_1d_1\ldots c_nd_n$, where $c_i$ is connected to $a_i$ for all $i\in \{ 1, \ldots, n\}$. It remains to note that if $d_n$ is trivial then we have $t\in H$, which contradicts Lemma \ref{l1} as above, and if $d_i$ reduces to a point or is labeled by an element of $H$ for some $i\in \{1, \ldots, n-1\}$ then $a_i$ and $a_{i+1}$ are connected, which contradicts Lemma \ref{l2}. In particular, each $c_i$ is a component of $q$ and $|q|\ge |p|$.
\end{proof}

\begin{figure}
  % Requires \usepackage{graphicx}
  \begin{center}
  \scalebox{1.25}{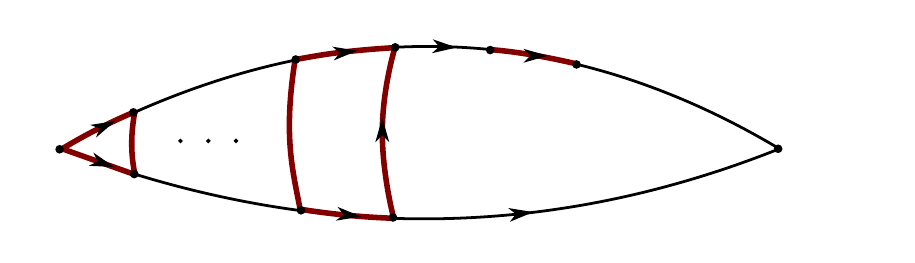}
\caption{Proof of Lemma \ref{l3}.}\label{fig1}
  \end{center}
\end{figure}

\begin{lem}\label{l4}
The set $tF$ freely generates a free subsemigroup of $G$.
\end{lem}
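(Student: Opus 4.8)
The plan is to prove the stronger statement that the evaluation map from the free semigroup on the alphabet $tF=\{tf\mid f\in F\}$ into $G$ is injective; since left translation by $t$ is a bijection, the elements $tf$ are pairwise distinct, so this is exactly what it means for $tF$ to freely generate a free subsemigroup. Concretely, suppose
\[
tf_1tf_2\cdots tf_n=tf_1'tf_2'\cdots tf_m'=:g,
\]
with all $f_i,f_j'\in F$, and let $p$ and $p'$ be the paths in $\G$ starting at $1$ with labels $w=tf_1\cdots tf_n$ and $w'=tf_1'\cdots tf_m'$; both terminate at $g$, and each carries its canonical decomposition as in (\ref{cd}). My goal is to deduce $n=m$ and $f_i=f_i'$ for every $i$.

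First I would record that, by Lemma \ref{l3}, both $p$ and $p'$ are geodesic. Since $|p|=2n$ and $|p'|=2m$ and both paths realize $\d_{\mathcal A}(1,g)$, this forces $n=m$. Next I would apply Lemma \ref{l3} to the pair consisting of $p$ and the geodesic $q=p'$ joining $p_-=1$ to $p_+=g$: it yields a factorization $p'=c_1d_1\cdots c_nd_n$ in which each $c_i$ is an $H$-component of $p'$ connected to $a_i$. Because $F\cap H=\emptyset$ (Lemma \ref{l1}), the edges $b_i'$ of $p'$ are not $H$-edges, so by Lemma \ref{l2} the $H$-components of $p'$, read in order, are precisely $a_1',\dots,a_n'$; hence $c_i=a_i'$, and $a_i'$ is connected to $a_i$. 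Connectedness means the endpoints of these two edges lie in a common left coset of $H$, so writing $u_j=tf_1\cdots tf_j=(a_{j+1})_-$ and $u_j'=tf_1'\cdots tf_j'=(a_{j+1}')_-$, I obtain $h_j:=u_j^{-1}u_j'\in H$ for $j=0,\dots,n-1$; moreover $h_n:=u_n^{-1}u_n'=g^{-1}g=1\in H$.

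The remaining step is purely algebraic. From $u_i=u_{i-1}tf_i$, the analogous relation $u_i'=u_{i-1}'tf_i'$, and the fact that $H$ is abelian (so $t$ commutes with every $h_j$), a short computation gives $h_{i-1}f_i'=f_ih_i$, i.e.
\[
h_{i-1}=f_ih_i(f_i')^{-1}\in FHF^{-1}.
\]
Since also $h_{i-1}\in H$, the crucial hypothesis (\ref{int}), namely $FHF^{-1}\cap H=\{1\}$, forces $h_{i-1}=1$. As $h_n=1$ as well, every $h_j$ vanishes, and then $h_{i-1}f_i'=f_ih_i$ collapses to $f_i'=f_i$ for all $i$. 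Together with $n=m$ this establishes injectivity and proves the lemma. The step I expect to be most delicate is the second paragraph: one must ensure that the purely combinatorial conclusion of Lemma \ref{l3}, that $c_i$ is connected to $a_i$, is correctly matched index by index with the $t$-edges $a_i'$ of $p'$ (this is precisely where $F\cap H=\emptyset$ is used), so that the connectedness relations translate into the simultaneous coset memberships $u_{i-1}^{-1}u_{i-1}'\in H$ that feed into (\ref{int}).
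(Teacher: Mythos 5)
Your proof is correct and follows essentially the same route as the paper: Lemma \ref{l3} is used to match the $H$-components of the two canonical decompositions index by index, and then (\ref{int}) forces the connecting elements of $H$ to be trivial, yielding $n=m$ and $f_i=f_i'$. The only cosmetic difference is that you exploit commutativity of the infinite cyclic group $H$ (so that $t^{-1}h_{i-1}t=h_{i-1}$, which is legitimate by Lemma \ref{l1}), whereas the paper reads the label of the small cycle $f_ib_ie_{i+1}d_i^{-1}$ to place the connecting element directly in $FHF^{-1}\cap H$ without invoking abelianness.
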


\begin{proof}
It suffices to show that if some words $w=tf_1tf_2\ldots tf_n$ and $u =tg_1tg_2\ldots tg_m$ in the alphabet $\mathcal A$, where $f_1, \ldots, f_n, g_1, \ldots, g_m\in F$, represent the same element in $G$ then the words $w$ and $u$ are equal, i.e., $m=n$ and we have
\begin{equation}\label{u=v}
f_1=g_1,\; \ldots,\; f_n=g_n
\end{equation}
in $H$.

Let $p$ and $q$ be paths in $\G$ starting at $1$ and labelled by $w$ and $u$, respectively, and let
$$
p=a_1b_1\ldots a_nb_n,\;\;\;{\rm and}\;\;\;  q=c_1d_1\ldots c_md_m
$$
be their canonical decompositions. By Lemma \ref{l3} we have $m=n$ and $a_i$ is connected to $c_i$ for all $i\in \{1, \ldots, n\}$. Let $e_i$, $f_i$ be edges of $\G$ labelled by elements of $H$ and connecting $(a_i)_- $ to $(c_i)_-$ and $(c_i)_+$ to $(a_i)_+$, respectively (see Fig. \ref{fig2}). Reading the label of the cycle $f_i b_ie_{i+1}d_i^{-1}$ and using (\ref{int}), we obtain $$\lab (f_i) \in FHF^{-1}\cap H=\{1\}$$ for all $1\le i \le n$ (for $i=n$, we read the label of the triangle $f_nb_nd_n^{-1}$). Thus, we have $(a_i)_+=(c_i)_+$ for all $1\le i \le n$. Similarly, $(a_i)_-=(c_i)_-$ for all $1\le i \le n$ . This obviously implies (\ref{u=v}).
\end{proof}

\begin{figure}
  % Requires \usepackage{graphicx}
  \begin{center}
  \scalebox{1.25}{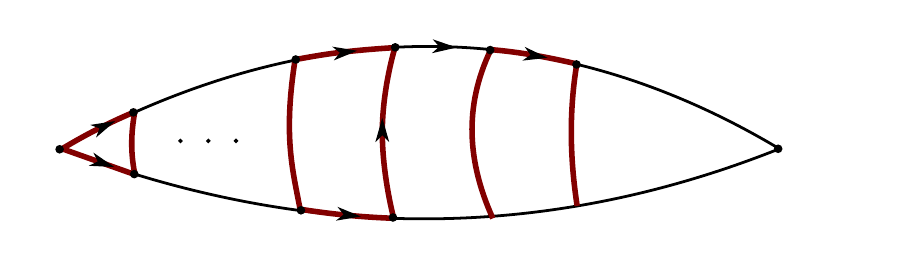}
\caption{Proof of Lemma \ref{l4}.}\label{fig2}
  \end{center}
\end{figure}

We construct the required generalized combing $C\colon G\times G\to \mathcal P(G)$ as follows. Let
$$
\Omega = \{ t^{\pm 1}\} \cup F^{\pm 1} \cup \{ h\in H \mid \dl (1, h)\le 5C\}
$$
The set $\Omega $ is finite as $H\h G$ (see condition (b) in Definition \ref{hedefn}). For a subgraph $\Delta $ of $\G$, we denote by $V(\Delta )$ the set of its vertices considered as a subset of $G$.

Recall that $S$ denotes the free subsemigroup of $G$ generated by $tF$. For each $g\in G$, we fix a geodesic path $\gamma _g$ in $\G$ connecting $g$ to $1$ such that
\begin{enumerate}
\item[($\ast$)] for every $s\in S\setminus\{1\}$, $\lab(\gamma _s)=tf_1\ldots tf_n$, where $f_1, \ldots, f_n\in F$.
\end{enumerate}
Note that we can always ensure ($\ast$) by Lemma \ref{l3}. Further, we define
\begin{equation}\label{Cs}
C(1,g) =  V(\gamma_g \cup g\gamma_{g^{-1}})\cdot \Omega\cdot \Omega.
\end{equation}
Informally, $C(1,g)$ is the $2$-neighborhood (with respect to the word metric associated to $\Omega$) of the set of vertices of the loop $\gamma_g \cup g\gamma_{g^{-1}}$. Finally, we let
\begin{equation}\label{Ce}
C(f,g)= fC(1, f^{-1}g)
\end{equation}
for all $f, g\in G$.

We also define the length function $\ell\colon G\to [0,+\infty)$ by the equation
$$
\ell(G)=|g|_{\mathcal A}.
$$

The next three lemmas finish the proof of Proposition \ref{combing}.

\begin{lem}
The generalized combing $C\colon G\times G\to \mathcal P(G)$ is symmetric and $G$-equivariant.
\end{lem}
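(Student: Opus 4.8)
The plan is to verify the two properties directly from the defining equations (\ref{Cs}) and (\ref{Ce}); I do not expect a genuine obstacle in either case, so the argument will be short.

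First I would check $G$-equivariance. For any $a, x, y\in G$, the definition (\ref{Ce}) gives
$$C(ax, ay) = (ax)\,C(1,(ax)^{-1}(ay)) = ax\,C(1, x^{-1}y) = a\big(x\,C(1,x^{-1}y)\big) = a\,C(x,y),$$
using only associativity of the group operation and (\ref{Ce}). This is the entire argument.

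For symmetry, I would first reduce $C(x,y) = C(y,x)$ to a single identity. Setting $z = x^{-1}y$ (so that $y^{-1}x = z^{-1}$ and $y = xz$) and applying (\ref{Ce}) to both sides, the equality $xC(1, z) = xz\,C(1, z^{-1})$ is equivalent, after cancelling $x$ on the left, to
$$C(1, z) = z\,C(1, z^{-1}).$$
To verify this I would expand both sides with (\ref{Cs}). The left-hand side equals $V(\gamma_z \cup z\gamma_{z^{-1}})\cdot\Omega\cdot\Omega$. On the right-hand side, left-multiplication by $z$ acts on vertex sets as $z\,V(\gamma_{z^{-1}}) = V(z\gamma_{z^{-1}})$ and $z\,V(z^{-1}\gamma_z) = V(\gamma_z)$, whence $z\,V(\gamma_{z^{-1}}\cup z^{-1}\gamma_z) = V(\gamma_z \cup z\gamma_{z^{-1}})$; multiplying by $\Omega\cdot\Omega$ on the right then recovers the left-hand side. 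Geometrically, the loop $\gamma_z \cup z\gamma_{z^{-1}}$, which runs from $z$ to $1$ and back to $z$, has a vertex set that is unchanged when we interchange the roles of $z$ and $z^{-1}$, and this is exactly what the identity records.

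The one point to keep in mind is that the identity uses the \emph{same} fixed geodesics on both sides: $C(1,z)$ involves $\gamma_z$ and $\gamma_{z^{-1}}$, while $C(1,z^{-1})$ involves $\gamma_{z^{-1}}$ and $\gamma_{(z^{-1})^{-1}} = \gamma_z$. Thus no inconsistency can arise from the arbitrary choices made when fixing the combing, and the degenerate case $z=1$ (with $\gamma_1$ the trivial path) is covered automatically.
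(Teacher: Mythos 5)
Your proof is correct and takes essentially the same route as the paper: the paper likewise treats $G$-equivariance as immediate from (\ref{Ce}) and proves symmetry by the computation $C(1,g)=V(\gamma_g\cup g\gamma_{g^{-1}})\Omega^2=gV(g^{-1}\gamma_g\cup\gamma_{g^{-1}})\Omega^2=gC(1,g^{-1})=C(g,1)$, which is exactly your identity $C(1,z)=zC(1,z^{-1})$ followed by the reduction via (\ref{Ce}). Your closing remark that both sides involve the \emph{same} fixed geodesics $\gamma_z$ and $\gamma_{z^{-1}}$ is precisely the point that makes the paper's one-line calculation work.
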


\begin{proof}
$G$-equivariance of $C$ immediately follows from (\ref{Ce}). To prove that $C$ is symmetric, we first note that
$$
C(1,g)=V(\gamma_g \cup g\gamma_{g^{-1}})\Omega^2=g V(g^{-1}\gamma_g \cup \gamma_{g^{-1}})\Omega^2 =gC(1, g^{-1})=C(g,1)
$$
by (\ref{Cs}) and equivariance, and then use (\ref{Ce}).
\end{proof}

\begin{lem}
Let $M=\max\limits_{u\in \Omega^2} |u|_\mathcal A$. Then the growth functions associated to $C$ and $\ell$ satisfy
\begin{equation}\label{gsubl}
\gamma (n)\le 2(n+M+1)|\Omega |^2
\end{equation}
and
\begin{equation}\label{dsubl}
\rho (n) \le n + M.
\end{equation}
\end{lem}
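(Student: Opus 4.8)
The plan is to read off both inequalities directly from the definition (\ref{Cs}) of $C(1,g)$, using crucially that $\ell(g)=|g|_{\mathcal A}$ is an honest word length in $\G$, so it is integer-valued and decreases by exactly one per edge along a geodesic. Throughout I write $L_g=V(\gamma_g\cup g\gamma_{g^{-1}})$ for the vertex set of the loop appearing in (\ref{Cs}), so that $C(1,g)=L_g\cdot \Omega^2$, and I record the standing observation that every vertex of $L_g$ is close to $1$: since $\gamma_g$ is a geodesic from $g$ to $1$, any of its vertices $v$ satisfies $|v|_{\mathcal A}=d_{\mathcal A}(v,1)\le |g|_{\mathcal A}$ (a subpath of a geodesic is a geodesic), and $g\gamma_{g^{-1}}$ is a geodesic from $1$ to $g$, so its vertices obey the same bound.

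First I would prove (\ref{dsubl}). Fix $g\in B(n)$, so $|g|_{\mathcal A}\le n$, and let $x\in C(1,g)$. Writing $x=vu$ with $v\in L_g$ and $u\in \Omega^2$, the observation above gives $|v|_{\mathcal A}\le |g|_{\mathcal A}\le n$, while $|u|_{\mathcal A}\le M$ by the definition of $M$. The triangle inequality for the word length then yields $\ell(x)=|vu|_{\mathcal A}\le |v|_{\mathcal A}+|u|_{\mathcal A}\le n+M$. Taking the supremum over $x\in C(1,g)$ and $g\in B(n)$ gives $\rho(n)\le n+M$.

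Next I would prove (\ref{gsubl}). Fix $g\in G$ and take any $x\in C(1,g)\cap B(n)$, again written as $x=vu$ with $v\in L_g$, $u\in \Omega^2$. Since $\Omega$, and hence $\Omega^2$, is symmetric, $|u^{-1}|_{\mathcal A}=|u|_{\mathcal A}\le M$, so $|v|_{\mathcal A}=|xu^{-1}|_{\mathcal A}\le |x|_{\mathcal A}+M\le n+M$, i.e. $v\in L_g\cap B(n+M)$. Hence $C(1,g)\cap B(n)\subseteq \bigl(L_g\cap B(n+M)\bigr)\cdot \Omega^2$, and it suffices to bound the size of the right-hand side. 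The key point is a count of how many vertices of a single geodesic lie in a ball: along a geodesic the word length of successive vertices changes by exactly $1$, so a geodesic meets $B(m)$ in at most $m+1$ vertices. Applying this to each of the two geodesics $\gamma_g$ and $g\gamma_{g^{-1}}$ with $m=n+M$ gives $|L_g\cap B(n+M)|\le 2(n+M+1)$. Multiplying by $|\Omega^2|\le |\Omega|^2$ and taking the supremum over $g$ yields $\gamma(n)\le 2(n+M+1)|\Omega|^2$.

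The only genuinely delicate step, which is more bookkeeping than obstruction, is the geodesic-counting estimate: it relies on $\ell=|\cdot|_{\mathcal A}$ being an integer-valued word length, decreasing by exactly one per edge along a geodesic ending at $1$, so that at most $m+1$ vertices of each geodesic can fall inside $B(m)$. For a general pseudolength this monotonicity could fail, so it is precisely here that the choice $\ell(g)=|g|_{\mathcal A}$ is used. Everything else reduces to the triangle inequality, the symmetry of $\Omega$, and the finiteness of $\Omega$ (guaranteed by $H\h G$).
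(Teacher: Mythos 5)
Your proof is correct and takes essentially the same route as the paper's: decompose $a=bw$ with $b\in V(\gamma_g\cup g\gamma_{g^{-1}})$ and $w\in\Omega^2$, bound $|b|_{\mathcal A}\le n+M$ by the triangle inequality, and count at most $2(n+M+1)$ admissible vertices on the two geodesics, the paper merely asserting the vertex count that you spell out. One caution: your mid-proof formulation of the counting step (``a geodesic meets $B(m)$ in at most $m+1$ vertices'') is false for arbitrary geodesics (staircase geodesics in $\mathbb Z^2$ can lie entirely in $B(m)$ while having $2m+1$ vertices), but your closing paragraph supplies the correct justification --- both $\gamma_g$ and $g\gamma_{g^{-1}}$ have $1$ as an endpoint, so the distance to $1$ changes monotonically by exactly one per edge --- which is precisely the point the paper leaves implicit.
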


\begin{proof}
Let $g\in G$. If $a\in C(1,g)$ and $|a|_\mathcal A\le n$, then by definition of $C(1,g)$ there exists $b\in  V(\gamma_g \cup g\gamma_{g^{-1}})$ and $w\in \Omega^2$ such that $a=bw$. In particular,
\begin{equation}\label{b}
|b|_\mathcal A\le |a|_\mathcal A + |w|_\mathcal A \le n+M.
\end{equation}
Since  $B=\gamma_g \cup g\gamma_{g^{-1}}$ is a union of two geodesics connecting $1$ and $g$, there are at most $2(n+M+1)$ vertices $b\in V(B)$ satisfying (\ref{b}). Therefore, there are at most $2(n+M+1)|\Omega |^2$ possibilities for $a$, which gives (\ref{gsubl}).

The inequality (\ref{dsubl}) follows immediately from the obvious fact that for every $g\in G$ and every $c\in C(1,g)$, we have $\d_\mathcal A (1,c)\le |g|_\mathcal A +M$.
\end{proof}

\begin{lem}\label{lemC}
For every $g\in G$ and $s\in S$, we have $C(1,s) \cap C(s,g) \cap C(1,g) \neq \emptyset$.
\end{lem}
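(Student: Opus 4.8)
The plan is to realize $C(1,s)$, $C(s,g)$ and $C(1,g)$ as $\Omega^2$-neighborhoods of the three sides of a geodesic triangle on $1,s,g$ and to locate a single vertex of $\gamma_s$ lying $\Omega$-close to all three sides at once. Set $\sigma=s\gamma_{s^{-1}g}$, a geodesic from $s$ to $g$; since $1\in\Omega$, equations \eqref{Cs} and \eqref{Ce} give $V(\gamma_s)\subseteq C(1,s)$, $V(\gamma_g)\subseteq C(1,g)$ and $V(\sigma)=sV(\gamma_{s^{-1}g})\subseteq sC(1,s^{-1}g)=C(s,g)$. Because $s\in S$, property~($\ast$) supplies the canonical decomposition $\gamma_s=a_1b_1\cdots a_nb_n$ with $\lab(a_i)=t$ and $\lab(b_i)=f_i\in F$, and by Lemma~\ref{l2} each $a_i$ is an isolated $H$-component of $\gamma_s$ with $\dl((a_i)_-,(a_i)_+)=\dl(1,t)>5C$.

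The core is an inductive ``marching'' along $\gamma_s$ that mimics the proof of Lemma~\ref{l3}, but now with two possible target sides $\gamma_g$ and $\sigma$. First I would treat $a_1$ via the geodesic quadrilateral $a_1,\gamma_s[(a_1)_+,s],\sigma,\gamma_g^{-1}$: were $a_1$ isolated, Proposition~\ref{n-gon} would force $\dl(1,t)\le 4C$, contradicting \eqref{dlt}, so $a_1$ is connected to an $H$-component of $\gamma_g$ or of $\sigma$. For the inductive step, assuming $a_1,\dots,a_i$ are connected to components $c_1,\dots,c_i$ of $\gamma_g$, I would form a geodesic pentagon with sides $a_{i+1}$, $\gamma_s[(a_{i+1})_+,s]$, $\sigma$, $\gamma_g[c_i,g]$, and one geodesic of length at most $2$ joining $c_i$ back to $(a_{i+1})_-$ through the connecting edge and $b_i$. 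Using Lemma~\ref{l2} (distinct $a_j$ lie in distinct cosets) together with the fact that distinct $H$-components of a geodesic lie in distinct cosets, one checks that $a_{i+1}$ can meet none of $\gamma_s[(a_{i+1})_+,s]$, the short side, or the $\gamma_g$-connections of earlier $a_j$; hence by Proposition~\ref{n-gon} and \eqref{dlt} it is again connected to $\gamma_g[c_i,g]$ or to $\sigma$. Marching from $a_1$ to $a_n$ therefore produces a clean transition: either some $a_{i_0}$ is connected to $\gamma_g$ while $a_{i_0+1}$ is connected to $\sigma$, or every $a_i$ is connected to $\gamma_g$ (in which case $(a_n)_+=s f_n^{-1}$ is already $\Omega$-close to the endpoint $s$ of $\sigma$).

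At the transition I would take $x=(a_{i_0})_+\in V(\gamma_s)\subseteq C(1,s)$ and verify $x\in C(1,g)\cap C(s,g)$. The decisive point is that each connecting $H$-element lies in $\Omega$: if $a_{i_0}$ is connected to a component $c$ of $\gamma_g$, then in the geodesic triangle with sides $\gamma_s[1,(a_{i_0})_-]$, $\gamma_g[1,c_-]$ and the connecting edge, that edge is an \emph{isolated} $H$-component, since no earlier $a_j$ and no other component of $\gamma_g$ shares its coset; Proposition~\ref{n-gon} then bounds its $\dl$-length by $3C<5C$, so it belongs to $\Omega$, and an analogous small polygon handles the connection of $a_{i_0+1}$ to $\sigma$. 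Consequently $x\in V(\gamma_g)\Omega\subseteq C(1,g)$, while $(a_{i_0+1})_-=x f_{i_0}\in V(\sigma)\Omega$ yields $x\in V(\sigma)\Omega^2\subseteq C(s,g)$ --- this is precisely where the second factor of $\Omega$ in \eqref{Cs} is needed. Hence $x\in C(1,s)\cap C(s,g)\cap C(1,g)$.

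The main obstacle I expect lies in the second paragraph: the slack in \eqref{dlt} is only $5C$, so every auxiliary figure must be kept to at most five sides (a hexagon would give the useless bound $6C$), and the short merged side must be arranged so that it creates no spurious $H$-connection in the coset of $a_{i+1}$. Controlling these polygons, while simultaneously keeping all connecting elements inside $\Omega$, is the delicate bookkeeping; it is carried out by the same device used in Lemmas~\ref{l2} and~\ref{l3}, namely repeatedly applying Proposition~\ref{n-gon} to small polygons in which the relevant edge is isolated.
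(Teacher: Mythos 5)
Your outline follows the paper's proof in its main components: the canonical decomposition of $\gamma_s$, the use of Proposition \ref{n-gon} together with (\ref{dlt}) to force each $a_i$ to be connected to the other two sides, the search for a transition index, and small polygons certifying that the connecting $H$-edges have $\dl$-length at most $5C$, hence lie in $\Omega$. However, there is a genuine gap: you never consider the case in which a single $a_i$ is connected to $H$-components of \emph{both} $\sigma$ and $\gamma_g$. The paper splits this off as its Case 1 (settled by $v=(a_i)_-$ and two triangles giving $3C$-bounds) and uses its negation throughout Case 2, and this exclusion is load-bearing, not cosmetic. Your bookkeeping needs the \emph{inner} connecting edges at the transition: the edge $f$ landing at $(a_{i_0})_+$ from $\gamma_g$ (to get $x\in V(\gamma_g)\Omega$) and the edge $e$ landing at $(a_{i_0+1})_-$ from $\sigma$ (to get $x\in V(\sigma)\Omega^2$). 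Any polygon that closes up around these edges while avoiding $a_{i_0}$ and $a_{i_0+1}$ must cross the transition region along both $\gamma_g$ and $\sigma$ --- this is the paper's pentagon $eb_{i}^{-1}f^{-1}r'(q')^{-1}$ --- and there $e$ (resp.\ $f$) is isolated precisely because $a_{i_0+1}$ (resp.\ $a_{i_0}$) is \emph{not} connected to the opposite side, i.e.\ because Case 1 has been excluded. Your proposed ``backward'' triangle on $\gamma_s[1,(a_{i_0})_-]$, $\gamma_g[1,c_-]$ and the connecting edge does avoid this issue, but it anchors the $\gamma_g$-closeness at the outer endpoint $(a_{i_0})_-$; the analogous unconditional triangle for the $\sigma$-side must close through $s$ and anchors at $(a_{i_0+1})_+$. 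These two anchors are three edges apart ($t$, $f_{i_0}$, $t$), so no single vertex is within $\Omega^2$ of both sides, and the budget in (\ref{Cs}) is exceeded. So either way the case split is indispensable, and your claim that the connecting edges are isolated ``since no earlier $a_j$ and no other component of $\gamma_g$ shares its coset'' is insufficient for the edges you actually use.

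Two further defects. First, your dichotomy is not exhaustive: $a_1$ may be connected to a component of $\sigma$ and to nothing on $\gamma_g$ (conceivably every $a_i$ is), and then there is neither a transition $a_{i_0}\to\gamma_g$, $a_{i_0+1}\to\sigma$ nor the all-$\gamma_g$ configuration. The paper's Case 2.a covers this with $v=1$, an endpoint of both $\gamma_s$ and $\gamma_g$, again invoking the Case 1 exclusion to isolate the connecting edge in the triangle $er(q')^{-1}$; a symmetric clause is missing from your march. Second, your merged side of length at most $2$ need not be a geodesic, so Proposition \ref{n-gon} does not apply to your pentagon as stated; this is repairable (replace it by a genuine geodesic and rule out a spurious $H$-edge on it via cosets and Lemma \ref{l2}), but note that the paper avoids your entire induction: for each $i$ it applies Proposition \ref{n-gon} directly to the pentagon $p_ia_ip_i'qr^{-1}$, whose five sides are automatically geodesic, and concludes at once that every $a_i$ is connected to a component of $q$ or of $r$, connections within $\gamma_s$ being excluded by Lemma \ref{l2} and by the observation that distinct $H$-components of a geodesic are never connected. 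No marching, and no ordering of the components $c_i$ along $\gamma_g$, is needed; the case analysis is then performed on the resulting pattern of connections, which is both simpler and exactly what makes the $\Omega$-bounds go through.
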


\begin{figure}
  % Requires \usepackage{graphicx}
  \begin{center}
 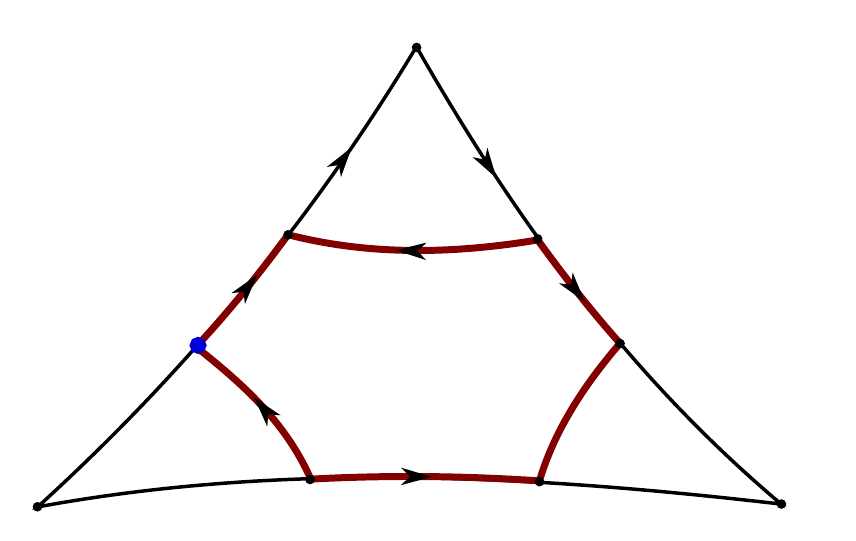
\caption{Case 1 in the proof of Lemma \ref{lemC}.}\label{fig3}
  \end{center}
\end{figure}

\begin{proof}
Let $s$ (respectively, $g$) be any element of $S$ (respectively, $G$). If $s=1$, then we have $1\in C(1,s) \cap C(s,g) \cap C(1,g)$. Thus, we can assume that $s\ne 1$ without loss of generality.

Consider the geodesic triangle $\Delta $ with vertices $1$, $s$, $g$, and sides $p=\gamma_s$, $q=s\gamma_{s^{-1}g}$, and $r=\gamma_g$. To prove the lemma it suffices to find a vertex $v\in V(p)$ such that
\begin{equation}\label{vV}
v\in V(q)\Omega^2 \cap  V(r)\Omega^2.
\end{equation}

Since $p=\gamma_s$ and $s\in S$, the path $p$ decomposes as in (\ref{cd}), (\ref{aibi}), where $f_1, \ldots, f_n\in F$. For each $1\le i\le n$, we denote by $p_i$ and $p_i^\prime$ the (possibly trivial) subpaths of $p$ such that $p=p_ia_ip_i^\prime$ and think of $p_ia_ip_i^\prime qr^{-1}$ as a geodesic pentagon. Proposition \ref{n-gon} and inequality (\ref{dlt}) imply that $a_i$ cannot be a isolated $H$-component in $\Delta$.

The following elementary observation will be used several times in this proof without explicit references: no two distinct $H$-components of a geodesic path in $\G$ can be connected (indeed, otherwise these two $H$-components and the segment of the geodesic between them could be replaced with a single edge, which contradicts geodesicity).  In particular, $a_i$ cannot be connected to another $H$-component of $p$ and therefore it is connected to an $H$-component of $q$ or $r$. We consider several cases.

{\it Case 1.} There is $1\le i\le n$ such that $a_i$ is connected to an $H$-component $c$ of $q$ and an $H$-component $d$ of $r$.

Let $e$ (respectively $f$) be the edge in $\G$ (or the trivial path) labelled by an element of $H$ and connecting $c_-$ to $(a_i)_+$ (respectively, $d_-$ to $(a_i)_-$). We denote by $q^\prime$ the subpaths of $q$ connecting $s$ to $c_-$ (see Fig. \ref{fig3}). Note that $e$ is isolated in the geodesic triangle $p_i^\prime q^\prime e$. By Proposition \ref{n-gon}, we have $\dl (1, \lab(e))\le 3C$; in particular, $\lab(e)\in \Omega$. Similarly, $\lab (f)\in \Omega$. Thus the vertex $v=(a_i)_-$ satisfies (\ref{vV}).
\begin{figure}
  % Requires \usepackage{graphicx}
  \begin{center}
 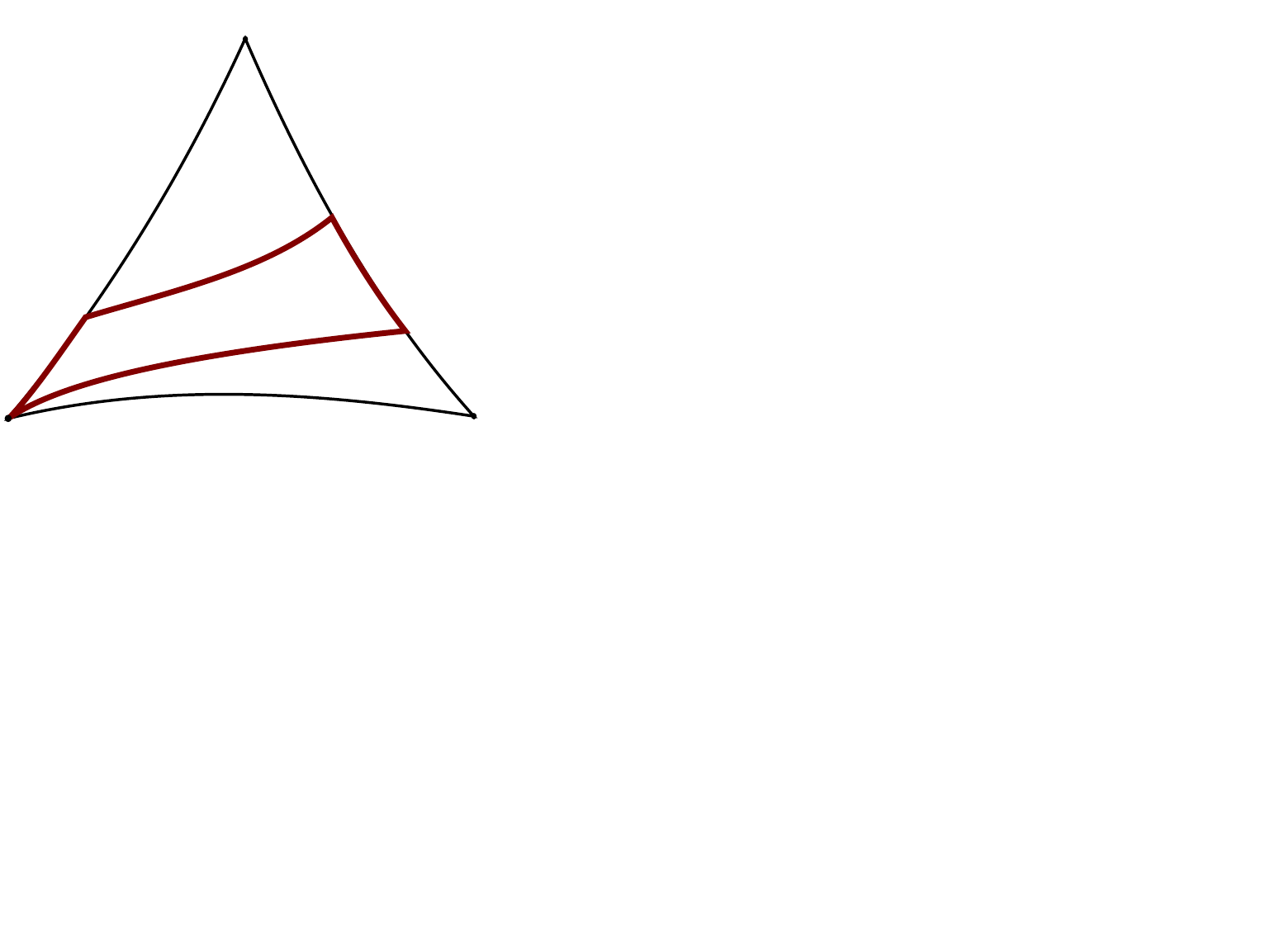
\caption{Cases 2.a-2.c in the proof of Lemma \ref{lemC}.}\label{fig4}
  \end{center}
\end{figure}

{\it Case 2.} Suppose that no $a_i$ is connected to $H$-components of both $q$ and $r$. In turn, this case subdivides into three subcases (see Fig. \ref{fig4}).

{\it Case 2.a.} First assume that $a_1$ is connected to an $H$-component $c$ of $q$. Let $e$ be the edge in $\G$ (or the trivial path) labelled by an element of $H$ and connecting $c_+$ to $(a_1)_-$. Let $q^\prime$ be the segment of $q$ bounded by $c_+$ and $g$. By our assumption, $a_1$ cannot be connected to an $H$-component of $r$; therefore, $e$ is isolated in the geodesic triangle $er(q^\prime)^{-1}$. As in Case 1, Proposition \ref{n-gon} implies $\lab(e)\in \Omega$. It follows that the vertex $v=(a_1)_-=1$ satisfies (\ref{vV}).

{\it Case 2.b.} Now suppose that $a_n$ is connected to an $H$-component $d$ of $r$. Let $f$ be the edge in $\G$ (or the trivial path) labelled by an element of $H$ and connecting $d_+$ to $(a_n)_+$. As above, we obtain $\lab(f)\in \Omega$. Note also that $f_n=\lab (b_n)\in \Omega$ by the definition of $\Omega$. It follows that the vertex $v=(a_n)_+$ satisfies (\ref{vV}).

{\it Case 2.c.} Finally assume that $a_1$ is connected to an $H$-component of $r$ and $a_n$ is connected to an $H$-component of $q$. Then there exists $1\le i<n$ such that $a_i$ is connected to an $H$-component $d$ of $r$ and $a_{i+1}$ is connected to an $H$-component $c$ of $q$.

Let $e$ (respectively, $f$) be the edge in $\G$ (or the trivial path) labelled by an element of $H$ and connecting $c_+$ to$(a_{i+1})_-$ (respectively, $d_+$ to $(a_i)_+$). Let $q^{\prime}$ and $r^{\prime}$ be the segments of $q$ and $r$ going from $c_+$ to $g$ and from $d_+$ to $g$, respectively. Note that $e$ and $f^{-1}$ are isolated in the geodesic pentagon $eb_i^{-1}f^{-1}r^{\prime}(q^{\prime})^{-1}$. By Proposition \ref{n-gon}, we have $\dl (1, \lab(e))\le 5C$ and hence $\lab(e)\in \Omega$. Similarly, $\lab(f)\in \Omega$. Taking $v$ to be either of the endpoints of $b_i$, we obtain (\ref{vV}).
\end{proof}

\section{Proof of the main theorem}

In this section, we introduce an auxiliary class of groups, denoted by $\mathcal C$, which includes all acylindrically hyperbolic groups with trivial finite radical and is closed under taking direct products. We then show that  $\sr(\C (G))=1$ for any $G\in \mathcal C$.

\begin{defn}\label{defC}
Let $\mathcal C$ be the class of all groups $G$ with the following property. For any finite subset $F\subset G$, there exists a pseudolength function $\ell$ on $G$, an element $t\in G$, and a symmetric $G$-equivariant generalized combing $C\colon G\times G\to \mathcal P(G)$ such that
\begin{enumerate}
\item[(a)] the set $tF$ freely generates a free subsemigroup $S$ of $G$;
\item[(b)] $C(1,s) \cap C(s,g) \cap C(1,g) \neq \emptyset$ for all $g\in G$ and $s\in S$;
\item[(c)] the growth functions $\gamma (n)$ and $\rho(n)$ of $C$ computed with respect to $\ell$ are bounded by some polynomials in $n$ from above.
\end{enumerate}
\end{defn}

\begin{lem}\label{prod}
The class of groups $\mathcal C$ is closed under taking finite direct products.
\end{lem}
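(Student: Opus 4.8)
The plan is to reduce to the case of two factors by induction on $k$ and then build the required data on $G=G_1\times G_2$ by combining the data from each factor coordinatewise. Write $\pi_i\colon G_1\times G_2\to G_i$ for the coordinate projections, and given a finite $F\subset G$ set $F_i=\pi_i(F)$, a finite subset of $G_i$. Since $G_i\in\mathcal C$, applying Definition \ref{defC} to $F_i$ yields a pseudolength function $\ell_i$, an element $t_i\in G_i$, and a symmetric $G_i$-equivariant combing $C_i$ whose free subsemigroup $S_i$ is generated by $t_iF_i$ and which satisfies (a)--(c). I would then set $t=(t_1,t_2)$ and define
\[
\ell(g)=\ell_1(\pi_1 g)+\ell_2(\pi_2 g),\qquad C(x,y)=C_1(\pi_1 x,\pi_1 y)\times C_2(\pi_2 x,\pi_2 y),
\]
viewing $C_1(\cdot,\cdot)\times C_2(\cdot,\cdot)$ as a subset of $G_1\times G_2=G$. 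That $\ell$ is a pseudolength function and that $C$ is symmetric and $G$-equivariant are immediate from the corresponding properties of the $\ell_i$ and $C_i$.

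For (b) the key observation is that intersections of product sets factor. Since $\pi_i(1)=1$, for $s=(s_1,s_2)\in S$ and $g=(g_1,g_2)\in G$ one has
\[
C(1,s)\cap C(s,g)\cap C(1,g)=\bigl(C_1(1,s_1)\cap C_1(s_1,g_1)\cap C_1(1,g_1)\bigr)\times\bigl(C_2(1,s_2)\cap C_2(s_2,g_2)\cap C_2(1,g_2)\bigr).
\]
Because $\pi_i$ is a homomorphism carrying $tF$ into $t_iF_i$, we have $\pi_i(s)=s_i\in S_i$, so each factor on the right is non-empty by condition (b) for $G_i$, and hence so is the product. For (c) I would use the inclusion $B(n)\subseteq B_1(n)\times B_2(n)$, valid because $\ell_1(\pi_1 g)+\ell_2(\pi_2 g)\le n$ forces $\ell_i(\pi_i g)\le n$. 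Intersecting $C(1,g)$ with $B(n)$ and projecting to each factor gives $\gamma(n)\le\gamma_1(n)\gamma_2(n)$, while estimating the $\ell$-length of an element of $C(1,g)$ for $g\in B(n)$ as a sum of its two coordinate lengths gives $\rho(n)\le\rho_1(n)+\rho_2(n)$; both bounds are polynomial since the $\gamma_i,\rho_i$ are.

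The step I expect to be the main obstacle is condition (a), that $tF$ \emph{freely} generates a free subsemigroup of $G$. Here I would argue by projecting relations. If two words $(tf_1)\cdots(tf_n)$ and $(tf_1')\cdots(tf_m')$ in the alphabet $tF$ represent the same element of $G$, then applying $\pi_1$ produces two words $(t_1\pi_1 f_j)_j$ and $(t_1\pi_1 f_j')_j$ in the alphabet $t_1F_1$ that are equal in $G_1$; since $t_1F_1$ freely generates $S_1$, this forces $n=m$ and $\pi_1 f_j=\pi_1 f_j'$ for all $j$. The symmetric argument in $G_2$ gives $\pi_2 f_j=\pi_2 f_j'$, whence $f_j=(\pi_1 f_j,\pi_2 f_j)=f_j'$ and the two words coincide. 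This establishes (a) and completes the verification that $G\in\mathcal C$; the general case then follows by induction on the number of factors.
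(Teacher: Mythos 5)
Your proof is correct and follows essentially the same route as the paper: reduce to two factors, project $F$ to the coordinates, take the product combing $C_1\times C_2$, and verify freeness of $tF$ by projecting relations coordinatewise. The only cosmetic differences are that you use the sum $\ell_1+\ell_2$ where the paper uses the maximum (yielding $\rho\le\rho_1+\rho_2$ instead of the paper's bound, equally sufficient for polynomial growth) and that you check freeness of $tF$ directly rather than first for $t(F_1\times F_2)$ and then restricting.
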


\begin{proof}
It suffices to prove that $G=G_1\times G_2\in \mathcal C$ for any $G_1, G_2\in \mathcal C$.

Given any finite $F\subset G$, we have $F\subset F_1\times F_2$, where $F_1$ and $F_2$ are the projections of $F$ to $G_1$ and $G_2$, respectively. Since $G_1, G_2\in \mathcal C$, there exist elements $t_1\in G_1$ and $t_2\in G_2$ such that $t_1F_1$ and $t_2F_2$ freely generate free subsemigroups. Let $t_1F_1=\{f_i\}_{i}$, $t_2F_2=\{g_j\}_{j}$. Assume that
$$(f_{i_1},g_{j_1})\dots (f_{i_n},g_{j_n})=(f_{k_1}, g_{l_1}) \dots (f_{k_m},g_{l_m})$$
or, equivalently,
$$f_{i_1} \dots f_{i_n}=f_{k_1} \dots f_{k_m} \text{ and } \; \; g_{j_1} \dots g_{j_n}=g_{l_1} \dots g_{l_m}.$$
Since $t_1F_1$ and $t_2F_2$ freely generate free subsemigroups, we have $n=m$ and $i_s=k_s$, $j_s=l_s$ for all $s\in \{1, \dots, n\}.$ Thus, the set $(t_1,t_2)(F_1\times F_2)$ freely generates a free subsemigroup of $G$ and, therefore, so does $(t_1,t_2)F\subseteq tF_1\times tF_2$. This proves condition (a) from Definition \ref{defC} for $t=(t_1,t_2)$.

Further, let $\ell_1$, $\ell_2$ (respectively, $C_1$, $C_2$) be pseudolength functions (respectively, symmetric equivariant generalized combings) on $G_1$ and $G_2$ such that, for $i=1,2$, we have
\begin{equation}\label{Ci}
C_i(1,s) \cap C_i(s,g) \cap C_i(1,g) \neq \emptyset
\end{equation}
for all $g\in G_i$ and $s\in S_i$, where $S_i$ is the subsemigroup of $G_i$ generated by $t_iF_i$, and the corresponding growth functions $\gamma_i$ and $\rho _i$ are bounded by some polynomials from above.
We define a generalized combing $C\colon G\times G\to \mathcal P(G)$ by the rule
$$
C((x_1, x_2),(y_1,y_2))= C_1(x_1,y_1)\times C_2(x_2, y_2)
$$
for all $(x_1,x_2),(y_1,y_2)\in G=G_1\times G_2$. It is straightforward to verify that  $C$ is $G$-equivariant, symmetric, and (\ref{Ci}) implies part (b) of Definition \ref{defC}.

Let also
$$
\ell(x,y)= \max\{\ell(x), \ell(y)\}
$$
for all $(x,y)\in G$. Clearly, $\ell$ is a pseudolength function on $G$. Let $B_i(n)$, $i=1,2$, and $B(n)$ denote the balls of radius $n$ centered at $1$ in $G_i$ and $G$, respectively, with respect to the length functions $\ell_i$ and $\ell$. Then we have $B(n)=B_1(n)\times B_2(n)$ for all $n\in \mathbb N$. Using the definition of $C$, for every $g=(x,y)\in G$ we obtain
\begin{align*}
B(n)\cap C(1,g) = & \left(B_1(n)\times B_2(n)\right)\cap\left(C_1(1,x)\times C_2(1,y)\right) = \\
& \left(B_1(n)\cap C_1(1,x)\right)\times \left(B_2(n)\cap C_2(1,y)\right).
\end{align*}
This implies
\begin{equation}\label{gn}
\gamma (n) \le \gamma _1(n)\gamma_2(n)
\end{equation}
for all $n\in \mathbb N$.
Similarly, for every $g=(x,y)\in B(n)$ and every $v=(u,w)\in C(1,g)$, we have
$$
\ell(v)=\max\{\ell_1(u), \ell_2(w)\}\le \max\{\rho_1(n), \rho_2(n)\},
$$
which implies
\begin{equation}\label{rn}
\rho(n)\le \rho_1(n)\rho_2(n)
\end{equation}
for all $n\in \mathbb N$.
Clearly, inequalities (\ref{gn}) and (\ref{rn}) together with our assumptions about $\gamma_i$ and $\rho _i$ imply condition (c) from Definition \ref{defC}.
\end{proof}

Our proof of Theorem \ref{main} is based on a sufficient condition for a group $G$ to satisfy $\sr(\C(G))=1$ obtained in \cite{DH} (see also \cite{Ror97}).

\begin{defn}
Let $G$ be a group and let $a\in \mathbb CG$. Following \cite{DH}, we say that $a$ has the \emph{$\ell^2$-spectral radius property} if $r_2(a)=r(a)$.
\end{defn}

\begin{thm}[{\cite[Theorem 1.4]{DH}}]\label{sr1}
Suppose that for any finite subset $F$ of a group $G$, there exists $t\in G$ such that $tF$ generates a free subsemigroup and every $a\in \mathbb{C} G$ with $supp(a)\subset tF$ has the $\ell^2$-spectral radius property. Then $\sr(\C(G))=1.$
\end{thm}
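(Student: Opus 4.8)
The plan is to prove the equivalent statement that $\GL(\C(G))$ is dense in $\C(G)$. Since $\lambda_G(\mathbb CG)$ is dense in $\C(G)$, it is enough to show that every $c\in\mathbb CG$ is a norm-limit of invertibles. Given $c$, I would apply the hypothesis to $F=\operatorname{supp}(c)$, obtaining $t\in G$ such that $tF$ freely generates a free subsemigroup and every element of $\mathbb CG$ supported on $tF$ has the $\ell^2$-spectral radius property. As $\lambda_G(t)$ is unitary, the element $a:=\lambda_G(t)c$ has $\operatorname{supp}(a)=tF$, and $\lambda_G(t)^{-1}y$ is invertible and close to $c$ whenever $y$ is invertible and close to $a$. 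Thus everything reduces to the \emph{key lemma}: an element $a$ supported on a free-semigroup basis and having the $\ell^2$-spectral radius property lies in $\overline{\GL(\C(G))}$.

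The first concrete step is to read off what the hypothesis gives numerically. Write $a=\sum_{i=1}^m\alpha_i s_i$ with $\{s_1,\dots,s_m\}=tF$ a free-semigroup basis. Freeness means the products $s_{i_1}\cdots s_{i_k}$ are pairwise distinct, so there is no collision among the monomials of $a^k$ and $\|a^k\|_2^2=\big(\sum_i|\alpha_i|^2\big)^k$. Hence $r_2(a)=\|a\|_2=:\beta$, and the $\ell^2$-spectral radius property promotes this to $r(a)=\beta$. Therefore $\operatorname{spec}(a)\subseteq\{|z|\le\beta\}$ and $\zeta-a\in\GL(\C(G))$ for every $|\zeta|>\beta$ via a norm-convergent Neumann series. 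This is the basic source of invertibility; its defect is that it only places invertibles far from $a$, in the direction of the identity.

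To produce invertibles actually near $a$ I would use the freeness once more. Factoring out the unitary $\lambda_G(s_1)$ gives $a=\lambda_G(s_1)\,b$ with $b=\alpha_1+\sum_{i\ge2}\alpha_i\lambda_G(s_1^{-1}s_i)$; since $\lambda_G(s_1)$ is invertible, it suffices to approximate $b$ by invertibles. The elements $s_1^{-1}s_i$ play the role of free Haar unitaries, with the $\ell^2$-spectral radius property standing in for genuine freeness, so that $b$ is a scalar plus an $R$-diagonal-type element whose spectrum is rotationally controlled by $\beta$. The structural picture behind this is the restriction of the $\lambda_G(s_i)$ to the half-space $\ell^2(S\cup\{1\})$, where $S$ is the free semigroup: there the $\lambda_G(s_i)$ become Cuntz--Toeplitz isometries and $a$ restricts to $\beta$ times a proper isometry. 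The task then becomes to show this $b$ is a limit of invertibles, after which the same follows for $a$.

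The main obstacle is precisely this last point: converting the one-sided bound $r(a)=\|a\|_2$ into invertibility of nearby elements. The spectral radius bounds $\operatorname{spec}(a)$ from outside, whereas invertibility needs $\operatorname{spec}(a)$ bounded away from $0$; because $a$ is far from normal its spectrum may contain a disc about the origin, so no scalar perturbation can dislodge $0$, and, as the example $\C(\mathbb Z^2)$ shows, stable finiteness of $\C(G)$ alone is not enough. The resolution is that the only obstruction to $a\in\overline{\GL(\C(G))}$ is an index ($K_1$) class, and this class vanishes because the proper isometry appearing on the half-space is the compression of the genuine unitary $\lambda_G(s_1)$ inside the finite algebra $\C(G)$; the $\ell^2$-spectral radius property is exactly what makes this cancellation quantitative and yields explicit invertible approximants. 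Carrying out this estimate, the technical heart of the Dykema--Haagerup--R\o rdam method, is the step I expect to be hardest.
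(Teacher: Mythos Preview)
The paper does not prove this theorem at all: it is quoted verbatim from \cite[Theorem 1.4]{DH} and used as a black box in the proof of Corollary~\ref{classC}. So there is no ``paper's own proof'' to compare against; your proposal is an attempt to reconstruct the argument of Dykema--de~la~Harpe (and, behind it, Dykema--Haagerup--R\o rdam), which this paper deliberately does not reproduce.

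As an outline of that external argument, your reductions are sound. Passing from $c$ to $a=\lambda_G(t)c$ is correct since $\lambda_G(t)$ is unitary; the computation $\|a^k\|_2=\|a\|_2^k$ from freeness of the semigroup basis, hence $r_2(a)=\|a\|_2$, is exactly the point of the free-semigroup hypothesis; and you correctly isolate the remaining task as showing that an element with $r(a)=\|a\|_2$ supported on a free basis lies in $\overline{\GL(\C(G))}$.

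Where your sketch drifts from the actual \cite{DH}/\cite{DHH} proof is in the mechanism for that last step. The argument there does not proceed via a $K_1$ obstruction or by compressing to the half-space $\ell^2(S\cup\{1\})$ and invoking Cuntz--Toeplitz isometries. It is more concrete: one analyses the polar decomposition of $\zeta-a$ for $|\zeta|$ just above $\|a\|_2$, using that $r(a)=\|a\|_2$ forces $(\zeta-a)^{-1}$ to have controlled norm, and produces explicit invertible approximants to $a$ by a functional-calculus/perturbation estimate (this is the content of the lemmas in \cite{DHH} that \cite{DH} packages). Your heuristic about index cancellation is suggestive but is not the route taken, and as written it does not yield the needed quantitative bound; if you want to fill this in, follow the spectral/norm estimates in \cite{DHH} rather than the $K$-theoretic picture.
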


Combining this theorem with Proposition \ref{CR}, we obtain the following.

\begin{cor}\label{classC}
For any group $G\in \mathcal C$, we have $\sr(\C(G))=1.$
\end{cor}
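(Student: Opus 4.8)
The plan is to assemble the result from two ingredients already available: Theorem \ref{sr1}, which reduces $\sr(\C(G))=1$ to a statement about free subsemigroups and the $\ell^2$-spectral radius property, and Proposition \ref{CR}, which supplies that property under growth hypotheses on a combing. So I would fix $G\in\mathcal C$ and an arbitrary finite $F\subset G$, and produce from Definition \ref{defC} a pseudolength function $\ell$, an element $t\in G$, and a symmetric $G$-equivariant generalized combing $C$ satisfying (a)--(c); write $S$ for the free subsemigroup generated by $tF$. The goal is to verify the hypotheses of Theorem \ref{sr1} for this $t$: namely that $tF$ generates a free subsemigroup, which is exactly (a), and that every $a\in\mathbb CG$ with $supp(a)\subset tF$ has $r_2(a)=r(a)$.

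To obtain the spectral radius property I would apply Proposition \ref{CR} to the data $\ell$, $S$, $C$. Its standing hypotheses are immediate: $C$ is symmetric and $G$-equivariant; the intersection condition (\ref{Cint}) is verbatim condition (b) of Definition \ref{defC}, valid for all $s\in S$ and $g\in G$; and $\gamma,\rho$ take finite values because (c) bounds them by polynomials. What remains is the limit hypothesis $\lim_{k\to\infty}\sqrt[k]{\gamma(\rho(k))}=1$ in the ``in particular'' clause of Proposition \ref{CR}.

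This growth estimate is the only step requiring a little care, though it is elementary. Since $B(n)\subseteq B(m)$ for $n\le m$, the function $\gamma$ is non-decreasing, so composing the two polynomial bounds from (c) shows that $\gamma(\rho(k))$ is bounded above by a fixed polynomial in $k$ (and below by $1$); taking $k$-th roots and squeezing gives the desired limit $1$. Proposition \ref{CR} then guarantees $r(a)=r_2(a)$ for every $a$ supported in $S\cap B(n)$ for some $n$. Any $a$ with $supp(a)\subset tF$ qualifies, because $tF$ is a finite subset of $S$ and hence lies in $B(n)$ for $n=\max_{x\in tF}\ell(x)$.

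With both hypotheses of Theorem \ref{sr1} checked, I conclude $\sr(\C(G))=1$. The proof is really just a matching of the conditions packaged in Definition \ref{defC} against those demanded by Proposition \ref{CR} and Theorem \ref{sr1}; the single point of genuine content is the polynomial-growth computation, which converts the polynomial bounds on $\gamma$ and $\rho$ into the subexponential-growth condition that forces $r(a)=r_2(a)$.
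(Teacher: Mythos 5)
Your proposal is correct and follows exactly the paper's proof: extract $\ell$, $t$, $C$ from Definition \ref{defC}, apply Proposition \ref{CR} to get the $\ell^2$-spectral radius property for elements supported in $tF$, and conclude via Theorem \ref{sr1}. The only difference is that you spell out the (easy) verification that polynomial bounds on $\gamma$ and $\rho$, together with monotonicity of $\gamma$, yield $\lim_{k\to\infty}\sqrt[k]{\gamma(\rho(k))}=1$, a step the paper leaves implicit.
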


\begin{proof}
Let $G\in \mathcal C$ and let $F\subseteq  G$ be a finite subset. Let $t\in G$ and $C\colon G\times G\to \mathcal P(G)$ be as in Definition \ref{defC}. By Proposition \ref{CR}, every $a\in \mathbb{C} G$ with $supp(a)\subset tF$ has the $\ell^2$-spectral radius property. Now Theorem \ref{sr1} gives us the desired result.
\end{proof}

We are finally ready to prove our main result.

\begin{proof}[Proof of Theorem~\ref{main}]
By Proposition \ref{combing}, any acylindrically hyperbolic group $G$ with trivial finite radical belongs to $\mathcal C$. Hence finite direct products of such groups are in $\mathcal C$ by Lemma \ref{prod}. It remains to apply Corollary \ref{classC}.
\end{proof}

\vspace{1cm}

\noindent \textbf{Maria Gerasimova:} Department of Mathematics, Bar-Ilan University, Ramat Gan, Israel.\\
E-mail: \emph{mari9gerasimova@mail.ru}

\vspace{.5cm}

\noindent \textbf{Denis Osin:} Department of Mathematics, Vanderbilt University, Nashville, U.S.A.\\
E-mail: \emph{denis.v.osin@vanderbilt.edu}

\end{document}